\documentclass[10pt]{amsart}
\usepackage{hyperref,amssymb}
\usepackage{bbm}

\usepackage[margin=1in]{geometry}
\usepackage{amsmath,amssymb,amsthm}
\usepackage{enumitem}

\newtheorem{theorem}{Theorem}[section]
\newtheorem{conjecture}[theorem]{Conjecture}
\newtheorem{lemma}[theorem]{Lemma}
\newtheorem{definition}[theorem]{Definition}
\newtheorem*{remark}{Remark}

\newcommand{\cost}{\operatorname{cost}}
\newcommand{\calA}{\mathcal A}
\newcommand{\calQ}{\mathcal Q}

\title{Proof of Thomassen's Conjecture on Highly connected subgraphs with large chromatic number}
\author{Achintya Raya Polavarapu}
\email{apolavarapu6@gatech.edu}

\address{School of Mathematics, Georgia Institute of Technology, Atlanta GA 30332.}

\date{\today}

\subjclass[2020]{Primary 05C15; Secondary 05C35, 05C40, 05C70}
\begin{document}

\begin{abstract}
For integers $k\ge 1$ and $m\ge 2$, let $g(k,m)$ be the least integer $n\ge 1$ such that every graph with chromatic number at least $n$ contains a $(k+1)$-connected subgraph with chromatic number at least $m$.  We prove that
\[
 g(k,m)\le \max(m+2k-2,\,3k+1)
\]
for all $k\ge 1$ and $m\ge 2$, establishing the 1983 conjecture of Thomassen~\cite{Thomassen} that $g(k,k+1)\le 3k+1$. The key new ingredient is a Hall-feasibility argument replacing the final numerical step in the proof of Nguyen~\cite{Nguyen}.
\end{abstract}

\maketitle

\section{Introduction}
\noindent For integers $k\ge 1$ and $m\ge 2$, define $g(k,m)$ to be the least integer $n\ge 1$ such that every graph with chromatic number at least $n$ contains a $(k+1)$-connected subgraph with chromatic number at least $m$. Thus $g(k,2)$ is simply the least $n$ such that every graph with chromatic number at least $n$ contains a $(k+1)$-connected subgraph, $g(1,m)=\max(m,3)$ for all $m\ge 2$, and the conjecture of Thomassen~\cite{Thomassen} asserts that $g(k,k+1)\le 3k+1$ for all $k\ge 1$.

\begin{conjecture}[Thomassen~\cite{Thomassen}, 1983]
\label{conj:thomassen}
For every integer $k\ge 1$, every graph with chromatic number more than $3k$ contains a $(k+1)$-connected subgraph with chromatic number more than $k$.
\end{conjecture}

\noindent The conjecture originated as a theorem with an incorrect proof. The study of $g(k,m)$ was initiated by Alon, Kleitman, Saks, Seymour, and Thomassen~\cite{AKSST}, who sought a correct substitute and established
\[
 \max(m+k-1,2k+1)\le g(k,m)\le \max(m+10k^2,100k^3+1).
\]
The upper bound was subsequently improved by Chudnovsky, Penev, Scott, and Trotignon~\cite{CPST} to
\[
 g(k,m)\le \max(m+2k^2,2k^2+k+1),
\]
and by Penev, Thomass\'e, and Trotignon~\cite{PTT} to
\[
 g(k,m)\le \max(m+2k-2,2k^2+1).
\]
Most recently, Gir\~ao and Narayanan~\cite{GN} proved that $g(k-1,k)\le 7k+1$, and Nguyen~\cite{Nguyen} refined their argument to
obtain
\[
 g(k,m)\le \max\!\left(m+2k-2,\left\lceil\!\left(3+\tfrac{1}{16}\right)k\right\rceil\right),
\]
making a step closer to Conjecture~\ref{conj:thomassen}. In this paper we prove the exact bound, thereby establishing Conjecture~\ref{conj:thomassen}.

\begin{theorem}
\label{thm:main}
For all integers $k\ge 1$ and $m\ge 2$,
\[
 g(k,m)\le \max(m+2k-2,3k+1).
\]
In particular, $g(k,k+1)\le 3k+1$ for all $k\ge 1$.
\end{theorem}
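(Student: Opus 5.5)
We follow the Girão--Narayanan/Nguyen framework and change only its final counting step. Fix $k\ge1$, $m\ge2$, put $n=\max(m+2k-2,3k+1)$, and suppose for contradiction that $G$ is a counterexample: $\chi(G)\ge n$, but no $(k+1)$-connected subgraph has chromatic number at least $m$. Take a vertex-minimal $n$-critical subgraph, so $\delta(G)\ge n-1$ and every proper subgraph is $(n-1)$-colourable. Rather than colouring $G$ directly, we will colour it in pieces.

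The first step builds the decomposition used by Penev--Thomassé--Trotignon and by Nguyen. We repeatedly cut $G$ along separations of order at most $k$ until the pieces are $(k+1)$-connected, each of chromatic number at most $m-1$, or are small. This yields a tree-like (laminar) structure. Each piece $P$ has an attachment set $S_P$ of at most $k$ vertices, through which it meets the rest of the graph, and the pieces are ordered so that every $S_P$ lies in earlier pieces. We then colour greedily in this order. When we reach $P$, the vertices of $S_P$ are already coloured. We must extend the colouring to $P\setminus S_P$ using $n-1$ colours, and we may permute the colours inside a piece so that they agree on $S_P$. Since $\chi(P)\le m-1\le n-2k+1$, colouring $P$ gives about $2k-2$ colours of slack. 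That slack is exactly what the term $m+2k-2$ pays for, and it suffices whenever the constraints coming from $S_P$ can be matched.

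The second step is the crux, the Hall-feasibility argument. Nguyen's final step is a numerical estimate on how many colours the attachment vertices force, and it loses the $k/16$. We would instead set up a bipartite graph. One side is the colour classes of an optimal colouring of $P$, in which some vertices of $S_P$ may share a class. The other side is the $n-1$ colours. A class $C$ is joined to colour $c$ if every vertex of $S_P\cap C$ that is already coloured has colour $c$, or if $C\cap S_P=\emptyset$ and $c$ is unused on $S_P$. A perfect matching of the classes gives the extension. Hall's condition fails only if some set $X$ of classes has $|N(X)|<|X|$.

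We bound $|N(X)|$ in two cases:
\begin{itemize}
\item If $X$ contains a class disjoint from $S_P$, then $N(X)$ contains all the unused colours. There are at least $n-1-k$ of these, while $|X|\le m-1$, so Hall holds because $n\ge m+k$ (we have $m+2k-2\ge m+k$ once $k\ge2$, and the case $k=1$ is already known).
\item If every class in $X$ meets $S_P$, then $|X|\le|S_P|\le k$. Here the colours on $S_P$ could be inconsistent, since two vertices of one class may have been given different colours earlier. In that case we must use the $2k$ extra colours and the bound $n\ge3k+1$: we recolour the conflicting attachment vertices, or we merge classes.
\end{itemize}

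The main obstacle is this second case. We must show that the inconsistency on $S_P$ costs at most $2k$ colours in total and does not pile up along the tree. The first thing to try is a stronger induction hypothesis: each piece is coloured so that $S_P$ receives distinct colours, which makes the map from classes to colours injective on $S_P$. Keeping $S_P$ rainbow requires $|S_P|\le k$ colours plus $\chi(P)$. When $\chi(P)$ is small, the $(k+1)$-connected or degree-based pieces need at most about $2k+1$ colours. Together this gives $3k+1$, and the deficiency form of Hall's theorem should turn it into an exact matching.
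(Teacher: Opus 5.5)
\textbf{There is a genuine gap.} The proposal stops exactly where the difficulty lies, and Step 1 is not available as stated.

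\textbf{Case 2 of the Hall argument.} This case is not a counting problem, so Hall's theorem contributes nothing there. When you reach $P$, all of $S_P$ is already coloured. So in your bipartite graph a class meeting $S_P$ is adjacent to at most one colour, and Hall's condition for families of such classes holds exactly when two partitions of $S_P$ coincide: the one induced by the earlier colouring and the one induced by the colouring of $P$. To achieve that you must recolour $P$, or earlier pieces, to realise a prescribed colour pattern on $S_P$. The proposal leaves this as ``the first thing to try''.

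\textbf{The rainbow fix.} This does not work either. A piece is the parent of many pieces whose attachment sets lie inside it and overlap. Making all of them rainbow amounts to colouring $P$ with a clique added on every child's attachment set, and that is not bounded by $\chi(P)+k$. Each level of gluing can therefore cost up to $k$ fresh colours. This is the accumulation that broke the original 1983 argument, and the claim that ``together this gives $3k+1$'' is unsupported.

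\textbf{Step 1.} Iterated cutting along separations of order at most $k$ does not yield pieces that are $(k+1)$-connected subgraphs of $G$ with attachment sets of size at most $k$. A separation of a piece may split its attachment set, and then a sub-piece meets the rest of $G$ in the new separator together with part of the old one. Natural bags become $(k+1)$-connected only after adding virtual edges on their adhesion sets (torsos). Torsos are not subgraphs of $G$, so your hypothesis gives no bound $\chi(P)\le m-1$ for them.

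\textbf{How the paper avoids both problems.} It uses no decomposition or gluing. It sets $|C|=\chi(G)-1\ge 3k$, observes that $G$ is $C$-inextensible via the empty template, and passes to a minimal $C$-inextensible induced subgraph $H$ with a good template $(S,c,F)$. The precoloured set $S$ and the forbidden lists $F(v)$, with cost $k|S|+\sum_v|F(v)|<2k^2$ and $|F(v)|\le k-1$, are the bookkeeping that absorbs separator constraints without accumulation. Nguyen's connectivity theorem then makes $H$ itself $(k+1)$-connected.

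Hall's theorem is applied to a different matching problem: the residual stable pieces $Y_i$, with lists $L_i$, left after Nguyen's reduction. Hall-bad families are excluded by cases on their size:
\begin{itemize}
\item sizes $1$ and $2$ by $|L_i|\ge d+t'\ge 2$, which needs $|C|\ge 3k$;
\item middle sizes by the global budget $\sum_i x_i<kt'$ (inherited from the cost bound), together with Nguyen's Claim~4.8 and the numerical lemma;
\item large sizes by Nguyen's Claim~4.10.
\end{itemize}
This yields $\chi(H)\ge |C|-2k+3\ge m$. Your setup has no analogue of this cost budget, and that budget is where the actual content of the proof lies.
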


\subsection*{Proof strategy}
Our proof follows the framework of Nguyen~\cite{Nguyen}, which works with minimal $C$-inextensible graphs equipped with good templates. After a sequence of reductions, one is left with a family of stable residual pieces $Y_i$, each carrying an available-color list $L_i$, and the proof requires assigning each piece a distinct color from its list.  Nguyen handles this by establishing a uniform lower bound $|L_i|\ge|I|$ for all $i$, which suffices for a greedy assignment but forces the palette size to exceed $3k$ by a factor of $\frac{1}{16}$.

\medskip\noindent Our key observation is that greedy assignment requires more than necessary.  By Hall's theorem, a system of distinct representatives exists if and only if every subfamily $\calA$ satisfies $|\bigcup_{i\in\calA}L_i|\ge|\calA|$.  We prove this Hall condition directly by showing that no Hall obstruction can exist at any size, via a case analysis on $|\calA|$.  Large obstructions are ruled out by a structural result already present in Nguyen's proof; medium-sized obstructions are ruled out by the global inequality $\sum x_i < kt'$ inherited from the cost bound on the template; and small obstructions are ruled out by a direct list-size estimate that requires only $|C|\ge 3k$. Working with the exact condition for distinct representatives rather than a sufficient one is what closes the $\frac{1}{16}$ gap.

\subsection*{Organization}

Section~2 recalls the template framework and the minimal inextensibility machinery of Nguyen.  Section~3 introduces admissible light decompositions and isolates the recoloring budget that motivates the Hall analysis. Section~4 carries out the Hall-feasibility argument and derives Theorem~\ref{thm:main}.

\section{Templates and minimal obstructions}

\noindent We begin by recalling the template framework (see \cite{PTT, GN} for more information) that converts the global statement about highly chromatic graphs into a local obstruction problem. Following Nguyen, the right objects are minimal $C$-inextensible graphs equipped with good templates: once one passes to such a minimal obstruction, the coloring constraints are encoded by a bounded-cost template, while the structural conclusion of Nguyen's connectivity theorem remains available. This reduces the proof of the exact Nguyen--Thomassen bound to a chromatic lower bound for minimal $C$-inextensible graphs.

\medskip\noindent Let $G$ be a graph and let $C$ be a finite set of colors.

\begin{definition}[Template]
A $C$-template is a triple
\[
 T=(S,c,F),
\]
where $S\subseteq V(G)$, $c:S\to C$ is a proper coloring of $G[S]$, and for every $v\in V(G)\setminus S$ there is a forbidden set $F(v)\subseteq C$.
\end{definition}

\noindent A $C$-coloring $\varphi$ of $G$ \emph{respects} $T$ if
\[
 \varphi|_S=c
 \qquad\text{and}\qquad
 \varphi(v)\notin F(v)\quad\text{for every }v\notin S.
\]

\noindent For $X\subseteq V(G)\setminus S$, define its forbidden weight by
\[
 w(X)=\sum_{v\in X}|F(v)|.
\]

\noindent The cost of a template is
\[
 \cost_k(T)=k|S|+\sum_{v\in V(G)\setminus S}|F(v)|.
\]

\begin{definition}[$C$-inextensible]
A graph $G$ is $C$-inextensible if there exists a $C$-template $T=(S,c,F)$ such that
\[
 \cost_k(T)<2k^2,
 \qquad
 |F(v)|\le k\quad(v\notin S),
\]
and no proper $C$-coloring of $G$ respects $T$.
\end{definition}

\begin{lemma}[Empty template]
If $|C|=\chi(G)-1$, then $G$ is $C$-inextensible.
\end{lemma}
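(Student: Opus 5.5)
We take the empty template $T=(\emptyset,c_\emptyset,F)$, where $c_\emptyset$ is the empty coloring and $F(v)=\emptyset$ for every $v\in V(G)$, and check the three requirements in the definition of $C$-inextensible one at a time.

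First, the cost. Since $S=\emptyset$ and every forbidden set is empty, $\cost_k(T)=k\cdot 0+\sum_{v}0=0$. Because $k\ge 1$, we have $0<2k^2$. The degree-type condition $|F(v)|=0\le k$ also holds trivially for every $v\notin S$. Note that $c_\emptyset$ is vacuously a proper coloring of the empty graph $G[\emptyset]$, so $T$ is a legitimate $C$-template.

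Second, inextensibility. Any $C$-coloring $\varphi$ respects $T$ automatically: the restriction condition $\varphi|_\emptyset=c_\emptyset$ is vacuous, and $\varphi(v)\notin\emptyset$ always holds. So a proper $C$-coloring of $G$ respecting $T$ is the same thing as a proper coloring of $G$ with at most $|C|=\chi(G)-1$ colors. No such coloring exists, by the definition of the chromatic number. Hence no proper $C$-coloring of $G$ respects $T$, and $G$ is $C$-inextensible.

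Nothing here is a genuine obstacle. The only points needing care are the conventions: the empty map counts as a proper coloring of $G[\emptyset]$, and $k\ge 1$ is used to get the strict inequality $0<2k^2$. If $\chi(G)=0$ (the empty graph), then $|C|=-1$ is impossible, so we tacitly assume $G$ is nonempty.
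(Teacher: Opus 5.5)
Your proposal is correct and follows the paper's proof: take $S=\varnothing$ with all forbidden sets empty, so that a respecting coloring is just a proper $C$-coloring of $G$, which cannot exist since $|C|=\chi(G)-1$. You also check $\cost_k(T)=0<2k^2$ and $|F(v)|=0\le k$ explicitly, which the paper leaves implicit.
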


\begin{proof}
Take $S=\varnothing$ and $F(v)=\varnothing$ for every $v\in V(G)$.  A respecting coloring is simply a proper coloring of $G$ using colors from $C$. This would imply $\chi(G)\le |C|=\chi(G)-1$, impossible.
\end{proof}

We use the following consequence of Nguyen's template framework.

\begin{lemma}[Good template principle, Nguyen~\cite{Nguyen}]
\label{lem:good-template}
Let $G$ be $C$-inextensible.  Among all witnessing templates, choose one with $|S|$ maximal.  Then every uncolored vertex satisfies
\[
 |F(v)|\le k-1.
\]
\end{lemma}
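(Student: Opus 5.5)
The plan is an exchange argument. Let $T=(S,c,F)$ be a witnessing template with $|S|$ maximal, and suppose for contradiction that some $v\notin S$ has $|F(v)|=k$; by the definition of $C$-inextensibility $|F(v)|\le k$ always, so this is the only case to rule out. The key observation is that moving $v$ from the uncoloured part into $S$ does not change the cost. The term $|F(v)|=k$ disappears from the sum and $k$ is added to $k|S|$, so
\[
 \cost_k(T')=\cost_k(T)<2k^2
\]
for any template $T'$ that colours $v$ and leaves every other forbidden set unchanged. The bound $|F(u)|\le k$ for $u\notin S\cup\{v\}$ is inherited from $T$. So the only issue is choosing a colour for $v$ that keeps the template legitimate and still leaves no respecting colouring.

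\emph{Main case.} Suppose some colour $\alpha\in C\setminus F(v)$ does not occur on $N(v)\cap S$ under $c$. Define
\[
 T'=\bigl(S\cup\{v\},\,c\cup\{v\mapsto\alpha\},\,F|_{V(G)\setminus(S\cup\{v\})}\bigr).
\]
This extended map is a proper colouring of $G[S\cup\{v\}]$, because $\alpha$ avoids the colours of all $S$-neighbours of $v$. Any proper $C$-colouring $\varphi$ respecting $T'$ agrees with $c$ on $S$ and satisfies $\varphi(v)=\alpha\notin F(v)$. It also avoids $F(u)$ for every other uncoloured $u$, so $\varphi$ respects $T$, which is impossible. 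Hence $T'$ witnesses $C$-inextensibility with a strictly larger coloured set, contradicting the maximality of $|S|$.

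\emph{Degenerate case, which I expect to be the main obstacle.} Here every colour of $C\setminus F(v)$ already appears on $N(v)\cap S$. Then $v$ cannot be coloured in any respecting colouring, so the template is ``over-determined'' at $v$. The plan is to use this slack to lower the cost instead of enlarging $S$.

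First, I would try replacing $F(v)$ by a smaller set. Any colour $\beta\in F(v)$ that also lies in $c(N(v)\cap S)$ can be removed from $F(v)$ without creating a respecting colouring, since $\beta$ is still blocked at $v$ by properness. If every colour of $F(v)$ can be removed this way, we get a witnessing template with $|F(v)|<k$. Taking this into account, it is cleaner to run the whole argument with a secondary extremal choice: among templates with $|S|$ maximal, take one with $\sum_{u\notin S}|F(u)|$ minimal. This forces every colour in $F(v)$ to be absent from $c(N(v)\cap S)$.

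In the degenerate case with $|F(v)|=k$, the colours of $F(v)$ are then not blocked by $S$-neighbours of $v$. I would then pick $\alpha\in F(v)$ and define $T''$ by colouring $v$ with $\alpha$. This is proper because $\alpha\notin c(N(v)\cap S)$, and the cost is unchanged as before. The remaining point to verify is that $T''$ has no respecting colouring. Such a colouring would have to colour $v$ with $\alpha$ while respecting everything else. I expect to close this either by appealing to the minimality of the $C$-inextensible graph in Nguyen's framework, or by the cost slack $\cost_k(T)<2k^2$, which allows one to pass to $G-v$ with $v$'s constraints pushed onto its uncoloured neighbours. Once this step is settled, $T''$ contradicts the maximality of $|S|$, and the lemma follows.
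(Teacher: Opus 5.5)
Your main case is the right exchange argument and is correct as written: when $|F(v)|=k$, moving $v$ into $S$ with a colour $\alpha\notin F(v)\cup c(N(v)\cap S)$ leaves $\cost_k$ unchanged, and any colouring respecting the new template also respects $T$. The genuine gap is the degenerate case, which you leave open, and the route you sketch there cannot be completed. Once $v$ is coloured with some $\alpha\in F(v)$, a colouring respecting $T''$ no longer has to respect $T$, and nothing forces $T''$ to be inextensible. Neither minimality of $G$ nor the cost slack rescues this.

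In fact, without a hypothesis on $|C|$ the statement is false. Let $G=K_1$ with vertex $v$, $|C|=k$, $S=\varnothing$ and $F(v)=C$. This template has cost $k<2k^2$ and no respecting colouring. Every template with $S=\{v\}$ is respected by its own precolouring. So $|S|=0$ is maximal, and every maximal witness has $|F(v)|=k$. This is exactly your degenerate case, even after your secondary minimisation. Moreover $K_1$ is minimal $C$-inextensible, yet your $T''$ is respected by $v\mapsto\alpha$.

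The missing idea is that the degenerate case is excluded by a count as soon as $|C|\ge 3k-1$. This holds wherever the lemma is applied in the paper, where $|C|\ge 3k$. If $|F(v)|=k$, then $k|S|+k\le\cost_k(T)<2k^2$, so $|S|\le 2k-2$. Since $|c(N(v)\cap S)|\le|S|$,
\[
 \bigl|C\setminus\bigl(F(v)\cup c(N(v)\cap S)\bigr)\bigr|\ge |C|-k-(2k-2)\ge 1 .
\]
So a colour $\alpha$ as in your main case always exists. Your main-case argument then applies to every witnessing template with $|S|$ maximal, and no secondary extremal choice is needed. You should add the hypothesis $|C|\ge 3k-1$ and replace the degenerate-case discussion by this count.

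The threshold is sharp. Take $|C|=3k-2$ and the star $K_{1,2k-2}$. Put its leaves in $S$, coloured by distinct colours, and let $F$ of the centre be the remaining $k$ colours. This is a witness of cost $2k^2-k$. Its $|S|$ is maximal, since any larger $S$ would be all of $V(G)$ and then the precolouring itself respects the template. Yet it violates the conclusion.
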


A template satisfying the conclusion of Lemma~\ref{lem:good-template} will be called good. We also use Nguyen's connectivity theorem for minimal inextensible graphs.

\begin{theorem}[Nguyen~\cite{Nguyen}]
\label{thm:nguyen-connectivity}
Let $H$ be an induced subgraph minimal with respect to being $C$-inextensible.  Then $H$ is $(k+1)$-connected.  Moreover,
\[
 |H|>|C|-k+1.
\]
\end{theorem}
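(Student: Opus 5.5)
We prove the two assertions of Theorem~\ref{thm:nguyen-connectivity} separately, both by contradiction against the minimality of $H$. Throughout, fix a witnessing template $T=(S,c,F)$ for $H$ with $\cost_k(T)<2k^2$, chosen good as in Lemma~\ref{lem:good-template}. Minimality means that for every vertex set $X\neq\varnothing$, the graph $H-X$ is not $C$-inextensible. The idea in both parts is to restrict $T$ to a proper induced subgraph, note that the restriction is still a template of cost below $2k^2$, and conclude that it extends to a coloring. Then we use the extra room to finish the coloring on all of $H$, contradicting inextensibility.

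\textbf{Size bound.} Suppose $|H|\le |C|-k+1$.

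First we get rid of $S$. If $S\neq\varnothing$, delete a vertex $s\in S$. Then $T$ restricted to $H-s$ witnesses nothing by minimality, so $H-s$ has a respecting coloring $\varphi$. Every neighbour $u$ of $s$ outside $S$ has $c(s)\notin F(u)$ or not; in either case we can instead use the template on $H-s$ obtained by adding $c(s)$ to $F(u)$ for each uncoloured neighbour $u$ of $s$. This new template has cost at most the old one, since $k|S|$ drops by $k$ and the forbidden sets grow by $\deg(s)$. The only issue is when $\deg(s)$ is large, which is exactly where the bound on $|H|$ is used.

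Now assume $S=\varnothing$, so every vertex keeps a list $C\setminus F(v)$ of size at least $|C|-k+1\ge |H|$. We then colour greedily: each vertex has at most $|H|-1$ neighbours, so its list always retains a free colour. This contradicts inextensibility. In the general case we colour $S$ by $c$ and handle the rest in the same way, with lists of size at least $|C|-k+1-|S|$ minus the neighbours among the uncoloured vertices. Either way the count gives $|H|>|C|-k+1$.

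\textbf{Connectivity.} Suppose $X$ is a separator with $|X|\le k$, splitting $H-X$ into sides $A$ and $B$. The plan has three steps.

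1. Apply minimality to $H[A\cup X]$ with the restricted template. This gives a coloring $\varphi_1$ of $A\cup X$ respecting $T$.

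2. Build a template on $H[B\cup X]$ by adding the vertices of $X$ to $S$ with the colours $\varphi_1$ assigns them. This adds at most $k|X|\le k^2$ to the cost.

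3. Show this new template on $H[B\cup X]$ still has cost below $2k^2$, so that minimality applies to it as well.

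For step 3 we choose the side appropriately: pick the side whose share of $\cost_k(T)$ is at least $k|X|$, so that it is what we delete. If both sides carried less than $k|X|$ we would have total cost below $2k|X|\le 2k^2$; we use this to argue that one side can always be chosen. Then the cost on $B\cup X$ is at most $\cost_k(T)-(\text{cost on }A)+k|X|<2k^2$. Minimality gives a coloring $\varphi_2$ of $B\cup X$ that agrees with $\varphi_1$ on $X$. Gluing $\varphi_1$ and $\varphi_2$ gives a respecting coloring of $H$, a contradiction.

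The main obstacle is step 3: ensuring that a side carrying at least $k|X|$ of the cost exists. If neither side does, we apply the good-template property (each $|F(v)|\le k-1$) together with the size bound already proved. These make each side large enough that its forbidden weight is at least $k|X|$, which settles the case and hence the $(k+1)$-connectivity.
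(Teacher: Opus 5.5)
The connectivity argument has a genuine gap at the gluing step. In step~1 you apply minimality to $H[A\cup X]$, which no longer contains the precoloured vertices of $S\cap B$. So nothing prevents $\varphi_1$ from giving some $x\in X\setminus S$ the colour $c(s)$ of a neighbour $s\in S\cap B$. The precolouring of $(S\cap B)\cup X$ in step~2 is then not proper, so it is not a template, no $\varphi_2$ exists, and the gluing fails. Adding $c(N(x)\cap S\cap B)$ to $F(x)$ would keep the cost below $2k^2$, but it can break $|F(x)|\le k$.

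The clean repair is to delete only $B\setminus S$ in step~1. That is, apply minimality to $H-(B\setminus S)$ with $T$ restricted (same lists, cost at most $\cost_k(T)$), so that $\varphi_1$ is automatically compatible with $c$ on $S\cap B$. For $H-(B\setminus S)$ to be a proper subgraph you need $B\setminus S\neq\varnothing$. This requires an idea missing from your proposal: \emph{every $s\in S$ has at least $k+1$ neighbours outside $S$}. If not, delete $s$ and add $c(s)$ to the forbidden sets of its at most $k$ uncoloured neighbours. The cost does not increase, goodness keeps every list of size at most $k$, and minimality colours $H-s$. Giving $s$ the colour $c(s)$ then yields a respecting colouring of $H$, a contradiction. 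Your discarded first paragraph was heading exactly here. Hence $B\subseteq S$ is impossible: a vertex of $B$ would have all its uncoloured neighbours in $X$, and $|X|\le k$.

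Your step~3 ``obstacle'' comes from double-counting $X$. The proposed resolution is also unsupported, since $|H|>|C|-k+1$ bounds $|H|$, not the weight of either side. Fortunately none is needed. Write $\cost(Z):=k|S\cap Z|+\sum_{v\in Z\setminus S}|F(v)|$. The second template on $H[B\cup X]$ then costs exactly $\cost(B)+k|X|$. Taking $B$ to be the cheaper side gives $\cost(B)<k^2$, because $\cost(A)+\cost(B)\le\cost_k(T)<2k^2$. So the total is below $2k^2$.

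The size bound is fine. The greedy count in your general case (at most $k-1$ forbidden colours plus at most $|H|-1\le|C|-k$ coloured neighbours) is the whole proof. The preliminary reduction removing $S$ is unnecessary, and its cost claim fails when $s$ has more than $k$ uncoloured neighbours. Finally, $(k+1)$-connectivity also requires $|H|\ge k+2$, which your size bound supplies once $|C|\ge 2k$.
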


\section{Reduction to an exact decomposition criterion}

\noindent Before turning to the final Hall argument, it is useful to isolate the abstract recoloring mechanism that underlies the proof.  If, after partially coloring the graph, the remaining vertices can be partitioned into light stable pieces whose total number is within the budget $2k-|S|+1$, then each piece has a large available color list and the template can be completed. Although the final proof in Section~4 works directly with Nguyen's residual reduction rather than by invoking this criterion formally, the present section explains why the conclusion should be viewed as a controlled list-coloring problem on stable pieces.

\begin{definition}[Admissible light decomposition]
Let $T=(S,c,F)$ be a good $C$-template on $H$.  An admissible light decomposition consists of:
\begin{enumerate}[label=(\roman*)]
\item a set $U\subseteq V(H)\setminus S$ colored by a proper coloring using $q$ colors from $C\setminus c(S)$ and respecting all forbidden lists on $U$;
\item a partition of $V(H)\setminus (S\cup U)$ into stable pieces
\[
 Q_1,\dots,Q_R;
\]
\item the lightness condition
\[
 w(Q_j)<k\qquad (1\le j\le R).
\]
\end{enumerate}
We call the decomposition \emph{exact} if
\[
 R+q\le 2k-|S|+1.
\]
\end{definition}

\begin{lemma}[Recoloring lemma at palette size $3k$]
Let $T=(S,c,F)$ be a good $C$-template with $|C|\ge 3k$.  Suppose there is an admissible light decomposition with parameters $q,R$ satisfying
\[
 R+q\le 2k-|S|+1.
\]
Then the template can be extended to a full respecting $C$-coloring.
\end{lemma}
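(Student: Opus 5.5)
The plan is to finish the coloring greedily. Each stable piece $Q_j$ receives a single color of its own. That color is new relative to $c(S)$ and to the $q$ colors already used on $U$, and it avoids every forbidden color occurring in $Q_j$. Edges of $H$ then fall into a few types, and each type is handled separately:
\begin{itemize}
\item edges inside $S$ are properly colored by $c$, and edges inside $U$ by the given coloring of $U$;
\item edges between $S$ and $U$ are fine because the colors on $U$ come from $C\setminus c(S)$;
\item no edge lies inside a piece $Q_j$, since it is stable;
\item every edge between $Q_j$ and the rest of the graph joins two different colors, by the choice of color for $Q_j$.
\end{itemize}
Hence the extended coloring is proper. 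It respects $T$ because $\varphi|_S=c$ and no vertex outside $S$ receives a forbidden color.

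Concretely, let $D\subseteq C\setminus c(S)$ be the $q$ colors used on $U$. For each $j$ define the list
\[
 L_j = C\setminus\Bigl(c(S)\cup D\cup \bigcup_{v\in Q_j}F(v)\Bigr).
\]
Two estimates give the size of $L_j$. First, $|c(S)|\le |S|$. Second, by the lightness condition,
\[
 \Bigl|\bigcup_{v\in Q_j}F(v)\Bigr|\le w(Q_j)\le k-1 .
\]
Combining these with $|C|\ge 3k$ and the hypothesis $R+q\le 2k-|S|+1$ gives
\[
 |L_j|\ge 3k-|S|-q-(k-1)=2k-|S|-q+1\ge R .
\]
Goodness of the template (Lemma~\ref{lem:good-template}) is only needed to guarantee that the lists $F(v)$ on $U$ could be respected in the first place. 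It is already built into the definition of an admissible decomposition, so we need not re-derive it.

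Since every list has size at least $R$, the colors can be chosen greedily. Process $Q_1,\dots,Q_R$ in order; at step $j$ at most $j-1\le R-1$ colors of $L_j$ have already been taken, so a fresh one remains. Assigning pairwise distinct colors $\alpha_j\in L_j$ and coloring all of $Q_j$ with $\alpha_j$ yields the required respecting coloring, by the edge check in the first paragraph. (Equivalently, Hall's condition holds trivially because every list has size at least $R$.)

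The only delicate point is the count of excluded colors. One has to be sure that $|c(S)|$ is bounded by $|S|$ rather than by something larger, and that forbidden sets inside a piece are bounded by the weight $w(Q_j)<k$, giving at most $k-1$ colors. These are exactly the two places where the budget $2k-|S|+1$ and the palette size $3k$ are used tightly. I expect no other obstacle.
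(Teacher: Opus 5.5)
Your proof is correct and is essentially the paper's argument: you use the same lists (colors avoiding $c(S)$, the $q$ colors on $U$, and the forbidden sets in $Q_j$), the same count $|L_j|\ge 3k-|S|-q-(k-1)=2k-|S|-q+1\ge R$, and your greedy choice is just the trivial case of the Hall argument the paper invokes. Your aside about goodness is unnecessary, since neither your argument nor the paper's actually uses Lemma~\ref{lem:good-template} here.
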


\begin{proof}
The set $U$ is already colored using $q$ colors outside $c(S)$.  Let $C'$ be the colors not used on $S$ and not used in the coloring of $U$.  Since $c$ is proper, we have $|c(S)|\le |S|$, and therefore
\[
 |C'|\ge |C|-|c(S)|-q\ge 3k-|S|-q.
\]
For each leftover piece $Q_j$, define its available list
\[
 L(Q_j)=C'\setminus \bigcup_{v\in Q_j}F(v).
\]
Since $w(Q_j)<k$, we have
\[
 |L(Q_j)|\ge |C'|-w(Q_j)> 3k-|S|-q-k.
\]
As all quantities are integral,
\[
 |L(Q_j)|\ge 2k-|S|-q+1.
\]
The exactness condition gives
\[
 R\le 2k-|S|-q+1.
\]
Thus every leftover piece has a list of at least $R$ available colors.  For every subfamily $\calA\subseteq\{Q_1,\dots,Q_R\}$, we have
\[
 \left|\bigcup_{Q_j\in\calA}L(Q_j)\right|
 \ge \max_{Q_j\in\calA}|L(Q_j)|
 \ge R
 \ge |\calA|,
\]
so Hall's condition holds.  Hence, by Hall's theorem (see, for example, Bondy and Murty~\cite{BondyMurty}), one may choose distinct colors
\[
 \gamma_j\in L(Q_j),\qquad 1\le j\le R.
\]
Since each $Q_j$ is stable, color every vertex in $Q_j$ with $\gamma_j$.  The colors are distinct across pieces, avoid the colors used on $S$ and $U$, and avoid every forbidden list.  Hence we obtain a full $C$-coloring respecting
$T$.
\end{proof}

\begin{theorem}[Reduction to exact decomposition]
\label{thm:reduction-to-decomposition}
Suppose that the following statement holds for every minimal $C$-inextensible graph $H$ with a good template $T=(S,c,F)$ and $|C|\ge 3k$:
if
\[
 \chi(H)\le |C|-2k+2,
\]
then $H$ admits an exact admissible light decomposition.  Then
\[
 g(k,m)\le \max(m+2k-2,3k+1)
\]
for all $k,m$.
\end{theorem}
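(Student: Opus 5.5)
Fix $k\ge1$, $m\ge2$, set $n=\max(m+2k-2,3k+1)$, and let $G$ be a graph with $\chi(G)\ge n$. We must find a $(k+1)$-connected subgraph $H\subseteq G$ with $\chi(H)\ge m$. Passing to a critical subgraph, we may assume $\chi(G)=n$. Take a color set $C$ with $|C|=\chi(G)-1=n-1$. Then
\[
|C|\ge\max(m+2k-3,3k)\ge 3k.
\]
By the Empty template lemma, $G$ is $C$-inextensible.

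Being $C$-inextensible is a property of induced subgraphs, obtained by restricting or re-choosing templates. So we choose an induced subgraph $H\subseteq G$ that is minimal with respect to being $C$-inextensible. By Theorem~\ref{thm:nguyen-connectivity}, $H$ is $(k+1)$-connected. Among the witnessing templates for $H$, choose one with $|S|$ maximal; by Lemma~\ref{lem:good-template} it is good. It remains to show $\chi(H)\ge m$.

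We prove the stronger bound $\chi(H)\ge |C|-2k+3$, which suffices because
\[
|C|-2k+3\ge (m+2k-3)-2k+3=m.
\]
Suppose instead that $\chi(H)\le |C|-2k+2$. The hypothesis of the theorem then applies to $H$, its good template $T$, and $|C|\ge 3k$, and gives an exact admissible light decomposition of $H$. The Recoloring lemma at palette size $3k$ then extends $T$ to a proper $C$-coloring of $H$ respecting $T$. This contradicts the $C$-inextensibility of $H$ witnessed by $T$. Hence $\chi(H)\ge |C|-2k+3\ge m$, as required.

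The main obstacle is the minimality step. We need to check that the minimal inextensible object can be taken as an induced subgraph of $G$ on which some witnessing template still exists, and that passing to a maximal-$|S|$ template preserves the cost bound $\cost_k(T)<2k^2$. Both points are already packaged in Lemma~\ref{lem:good-template} and Theorem~\ref{thm:nguyen-connectivity}, so the argument reduces to the parameter arithmetic above. The one delicate choice is $|C|=n-1$ rather than a larger palette: this choice is what makes the threshold $|C|-2k+2$ in the hypothesis land exactly at $m-1$ and gives $|C|\ge 3k$ at the same time.
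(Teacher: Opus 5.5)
Your proposal is correct and follows the paper's proof essentially step for step. You use the empty template on $G$ with $|C|=\chi(G)-1$, take a minimal induced $C$-inextensible subgraph $H$ with a maximal-$|S|$ (hence good) witness, combine the hypothesis with the recoloring lemma to force $\chi(H)\ge |C|-2k+3\ge m$, and invoke Theorem~\ref{thm:nguyen-connectivity} for $(k+1)$-connectivity. Your only deviation is first passing to a subgraph with $\chi(G)=n$ exactly. This is harmless but unnecessary, since $|C|=\chi(G)-1$ already gives $|C|\ge\max(m+2k-3,3k)$.
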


\begin{proof}
Let $G$ satisfy
\[
 \chi(G)\ge \max(m+2k-2,3k+1).
\]
Set $|C|=\chi(G)-1$.  Then $|C|\ge 3k$, and by the empty-template lemma $G$ is $C$-inextensible.  Let $H\subseteq G$ be an induced subgraph minimal with respect to $C$-inextensibility.  By Lemma~\ref{lem:good-template}, $H$ has a
good inextensibility witness.  If
\[
 \chi(H)\le |C|-2k+2,
\]
then the hypothesis gives an exact admissible light decomposition of $H$, and the recoloring lemma produces a respecting $C$-coloring of $H$, a contradiction.  Therefore
\[
 \chi(H)\ge |C|-2k+3=\chi(G)-2k+2.
\]
Since $\chi(G)\ge m+2k-2$, this gives
\[
 \chi(H)\ge m.
\]
By Theorem~\ref{thm:nguyen-connectivity}, the graph $H$ is $(k+1)$-connected.  Thus $H$ is a $(k+1)$-connected subgraph of chromatic number at least $m$, as required.
\end{proof}

Although Theorem~\ref{thm:reduction-to-decomposition} is not invoked in the proof of our main theorem, it isolates the recoloring budget $2k-|S|+1$ that motivates the Hall analysis in Section~4.  In the proof of Theorem~\ref{thm:exact-nguyen-chromatic} we work directly with Nguyen's residual pieces rather than first packaging them as an exact admissible light decomposition.

\section{Residual Hall obstructions}

\noindent We now carry out the final step of the argument inside Nguyen's reduction. After the preliminary recoloring stages, all uncolored vertices lie in a family of stable residual pieces, each equipped with a list of colors that remain legal for that piece.  Nguyen's original proof finishes this stage by a pointwise numerical estimate that requires extra slack in $|C|$.  Our contribution is to replace that last numerical step with a Hall-feasibility analysis of the residual lists: we show that every putative Hall obstruction is incompatible with Nguyen's global budget bounds, thereby allowing Hall's theorem to complete the coloring at the exact threshold.

\medskip\noindent We now record the part of Nguyen's proof of \cite[Lemma~4.1]{Nguyen} that will be used in the Hall analysis.

\begin{lemma}[Reduction package from Nguyen~\cite{Nguyen}]
\label{lem:nguyen-reduction-package}
Let $H$ be a minimal $C$-inextensible graph with a good template $T=(S,c,F)$ and fix a partition
\[
 V(H)=S_1\cup\cdots\cup S_\chi
\]
of $V(H)$ into stable sets.  For each $i\in[\chi]$, set
\[
 P_i:=S_i\setminus S,
 \qquad
 p_i:=\left\lfloor\frac{w(P_i)}{k}\right\rfloor,
 \qquad
 p:=\sum_{i=1}^{\chi}p_i,
 \qquad
 t:=2k-|S|,
 \qquad
 t':=t-p.
\]
Then Nguyen's Lemma~4.5 and the subsequent reduction in his proof of Lemma~4.1 produce the following objects.
\begin{enumerate}[label=(\roman*)]
\item Integers $q_i,t_i$ with
\[
 0\le q_i\le p_i
 \qquad\text{and}\qquad
 p_i=q_i+t_i
 \qquad (i\in[\chi]).
\]
\item Subsets $P_i'\subseteq P_i$ and partitions
\[
 X_i:=P_i\setminus P_i'
 =
 P_{i1}\cup\cdots\cup P_{i,t_i+1}
 \qquad (i\in[\chi]),
\]
where each $P_{ij}$ is stable and satisfies
\[
 w(P_{ij})<k.
\]
\item A proper coloring $c_1$ of
\[
 S^1:=S\cup \bigcup_{i=1}^{\chi}P_i'
\]
respecting the template, such that at most $q_i$ colors are used on $P_i'$.
\item Quantities
\[
 x_i:=w(X_i)-t_i k
 \qquad (i\in[\chi])
\]
satisfying
\[
 0\le x_i<k
 \qquad (i\in[\chi])
\]
and
\[
 \sum_{i=1}^{\chi}x_i<kt'.
\]
(This inequality follows from the cost bound $\operatorname{cost}_k(T)<2k^2$ together with the definitions of $p$, $t$, $t'$, and $x_i$; see Nguyen~\cite{Nguyen}, proof of Lemma~4.1.)

\item Index sets
\[
 I_0:=\{i\in[\chi]:t_i=0\},
 \qquad
 I_2\subseteq[\chi]\setminus I_0,
\]
where $I_2$ is produced by Nguyen's greedy selection procedure (see~\cite{Nguyen}, proof of Lemma~4.1): indices are added to $I_2$ one at a time as long as $\bigcup_{i\in I_2}X_i$ can be colored, extending $c_1$, with at most $\sum_{i\in I_2}t_i$ additional colors while respecting the template, and the procedure terminates when no further index can be added without violating this budget.  We then set
\[
 I_1:=[\chi]\setminus (I_0\cup I_2),
 \qquad
 I:=I_0\cup I_1.
\]

\item An extension $c_2$ of $c_1$ to
\[
 S^2:=S^1\cup\bigcup_{i\in I_2}X_i.
\]
\item For each $i\in I_1$, a minimum-weight part $Y_i$ among $P_{i1},\dots,P_{i,t_i+1}$, and an extension $c_3$ of $c_2$ to
\[
 S^3:=S^2\cup\bigcup_{i\in I_1}(X_i\setminus Y_i)
\]
that uses at most $t_i$ colors on $X_i\setminus Y_i$.  For $i\in I_0$, we set
\[
 Y_i:=X_i.
\]
\end{enumerate}
If we write
\[
 y_i:=w(Y_i)
 \qquad (i\in I),
\]
then each $Y_i\subseteq S_i$ is stable, each $y_i<k$, and the only uncolored vertices after the coloring $c_3$ are the residual pieces $Y_i$ for $i\in I$. Their residual lists are
\[
 L_i:=C\setminus\left(c_3(S^3\setminus S_i)\cup \bigcup_{v\in Y_i}F(v)\right)
 \qquad (i\in I),
\]
where $c_3(S^3\setminus S_i)$ denotes the set of colors used by $c_3$ on vertices of $S^3\setminus S_i$.  Nguyen's calculation also gives
\[
 |c_3(S^3\setminus S_i)|\le 2k-t'-|S\cap S_i|-p_i
 \qquad (i\in I).
\]
Finally, we set
\[
 s_1:=\sum_{i\in I_1}t_i.
\]
For $i\in I_0$, one has $x_i=y_i$.
\end{lemma}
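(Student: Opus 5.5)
The statement is a bookkeeping extraction from Nguyen's proof of \cite[Lemma~4.1]{Nguyen}, so I would not re-derive his structural lemma. I would take \cite[Lemma~4.5]{Nguyen} as the black box. The plan is to run his three coloring stages $c_1,c_2,c_3$ in order and check that each listed property is either a direct output of that black box or a short computation from the template data.

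\textbf{Step 1: the objects in (i)--(iii).} I would invoke Lemma~4.5 class by class. Fix $i$ and write $w(P_i)=kp_i+r_i$ with $0\le r_i<k$. Lemma~4.5, applied using goodness of $T$ (Lemma~\ref{lem:good-template}) and minimality of $H$, peels off a set $P_i'\subseteq P_i$ colored with $q_i\le p_i$ new colors respecting $T$. The remainder $X_i$ is split into $t_i+1=p_i-q_i+1$ stable parts $P_{ij}$, each with $w(P_{ij})<k$. Since $P_i\subseteq S_i$, every part is automatically stable.

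The one property I must make sure Lemma~4.5 provides is
\[
 w(P_i')\ge kq_i,
\]
that is, each peeled colour class absorbs forbidden weight at least $k$. Granting this, set $x_i=w(X_i)-kt_i$. Then
\[
 x_i\le w(P_i)-kq_i-kt_i=r_i<k .
\]
The lower bound $x_i\ge 0$ follows because the parts were formed from weight at least $kt_i$.

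\textbf{Step 2: the global inequality in (iv).} Summing over $i$ and using $\cost_k(T)<2k^2$ gives
\[
 \sum_i w(P_i)<2k^2-k|S|=kt .
\]
Hence
\[
 \sum_i x_i\le \sum_i\bigl(w(P_i)-kp_i\bigr)<kt-kp=kt'.
\]

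\textbf{Step 3: the greedy stages (v)--(vii).} I would define $I_2$ by the greedy procedure exactly as stated; it terminates since $[\chi]$ is finite. This yields $c_2$ using at most $\sum_{i\in I_2}t_i$ extra colours. For $i\in I_1$ there are $t_i+1$ light parts. I would colour the $t_i$ non-minimal ones with one fresh colour each; this is legitimate because they are stable and each part has weight below $k$, so a free colour exists. The minimum-weight part is left over as $Y_i$, which gives $c_3$. For $i\in I_0$ we have $t_i=0$, so $X_i$ is a single part, $Y_i=X_i$, and $y_i=w(X_i)=x_i$.

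\textbf{Step 4: the colour-count bound.} Colours used on $S^3\setminus S_i$ come from three sources:
\begin{itemize}
\item $S\setminus S_i$, contributing at most $|S|-|S\cap S_i|$ colours;
\item the sets $P_j'$ for $j\ne i$, contributing at most $q_j$ colours each;
\item the sets $X_j$ or $X_j\setminus Y_j$ for $j\ne i$, contributing at most $t_j$ colours each.
\end{itemize}
Using $q_j+t_j=p_j$ and $|S|=2k-t$, the total is at most
\[
 2k-t-|S\cap S_i|+p-p_i=2k-t'-|S\cap S_i|-p_i .
\]
Everything in $S_i$ itself is excluded from this count.

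\textbf{Main obstacle.} The step I expect to be hardest is Step 1: confirming that Nguyen's Lemma~4.5 really guarantees $w(P_i')\ge kq_i$ together with the exact partition sizes $t_i+1$. If his statement is phrased differently, I would first try to re-derive this directly. Suppose some colour class used on $P_i'$ had forbidden weight below $k$. Then moving it into $S$ with that colour would give a witnessing template of no larger cost and larger $|S|$, contradicting the maximality of $|S|$ in Lemma~\ref{lem:good-template}.
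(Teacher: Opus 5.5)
Your route is the paper's own. The paper proves nothing here beyond citing Nguyen's Lemma~4.5 and his proof of Lemma~4.1, and your Step~4 count is correct. Step~2 also rightly makes explicit something the paper's parenthetical leaves implicit: $\sum_i x_i<kt'$ does not follow from $\cost_k(T)<2k^2$ and the definitions alone, but needs $\sum_i w(P_i')\ge k\sum_i q_i$.

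\textbf{Step 1.} The gap is in what you propose to supply yourself here. Your fallback derivation fails. Moving a colour class $Q\subseteq P_i'$ into $S$ changes the cost by $k|Q|-w(Q)$. Since a good template has $|F(v)|\le k-1$, this is strictly positive for every nonempty $Q$. The new template therefore need not have cost below $2k^2$, so it is not a witness, and maximality of $|S|$ is never contradicted. The inequality $w(P_i')\ge kq_i$ has to come from how Nguyen chooses $P_i'$, not from goodness of $T$.

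You also need the opposite inequality. Writing $w(P_i)=kp_i+r_i$, one has $x_i=r_i-(w(P_i')-kq_i)$. So $x_i\ge 0$ is equivalent to $w(P_i')\le kq_i+r_i$. Your justification, ``the parts were formed from weight at least $kt_i$'', merely restates $w(X_i)\ge kt_i$. Properties (i)--(iii) allow a colour class of $P_i'$ to carry weight far above $k$, for instance many vertices sharing one forbidden set. So this upper bound is a genuine extra input that must also be imported from Nguyen. By contrast, $x_i<k$ is free: (ii) gives $w(X_i)\le (t_i+1)(k-1)$, hence $x_i\le k-1-t_i$.

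\textbf{Step 3.} This step has a smaller hole. A part of weight below $k$ has fewer than $k$ forbidden colours, but its colour must also avoid the colours already on its neighbours. A fresh colour does exist: at the moment of colouring, at most $|S|+p-1=2k-t'-1$ colours are in use (your Step~4 count), and at most $k-1$ are forbidden. This requires $|C|\ge 3k-t'-1$. Since $kp\le\sum_i w(P_i)<kt$ forces $t'\ge 1$, $|C|\ge 3k-2$ suffices. However, no lower bound on $|C|$ appears among the lemma's hypotheses; it enters only through the section's standing assumption $|C|\ge 3k-1$.

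Finally, if you construct $c_3$ yourself rather than taking Nguyen's, you must check that it is the colouring to which Claim~4.10 refers, since the lists $L_i$ depend on $c_3$.
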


\begin{lemma}[Nguyen's Claim~4.8, restated]
\label{lem:nguyen-claim48}
In the notation of Lemma~\ref{lem:nguyen-reduction-package}, we have
\[
 x_i\ge t_i\bigl(|C|-3k+t'+s_1+|S\cap S_i|+q_i\bigr)
 \qquad (i\in I).
\]
\end{lemma}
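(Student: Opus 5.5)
The plan is to split according to whether $i\in I_0$ or $i\in I_1$, and to treat the second case by contradiction with the termination of Nguyen's greedy procedure in Lemma~\ref{lem:nguyen-reduction-package}(v).

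\emph{Case $i\in I_0$.} Here $t_i=0$. The right-hand side of the claimed inequality vanishes, and $x_i\ge 0$ by Lemma~\ref{lem:nguyen-reduction-package}(iv), so there is nothing to prove.

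\emph{Case $i\in I_1$: setting up the contradiction.} Now $t_i\ge 1$, and the index $i$ was rejected by the greedy procedure. Hence $X_i$ cannot be colored, extending $c_2$ and respecting the template, with at most $t_i$ additional colors. We argue contrapositively: assuming
\[
 x_i< t_i\bigl(|C|-3k+t'+s_1+|S\cap S_i|+q_i\bigr),
\]
we will exhibit such a coloring of $X_i$.

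\emph{Case $i\in I_1$: the free palette.} The first step is to bound the colors that are unavailable to $X_i$. Since $X_i\subseteq S_i$ is stable, a color is usable on a vertex $v\in X_i$ exactly when it does not appear on $c_2(S^2\setminus S_i)$ and does not lie in $F(v)$. Colors already used on $P_i'$, at most $q_i$ of them, may be reused at no cost to the budget. I would count $|c_2(S^2\setminus S_i)|$ using three contributions:
\begin{itemize}
\item at most $|S|-|S\cap S_i|=2k-t-|S\cap S_i|$ colors on $S\setminus S_i$;
\item at most $q_j$ colors on each $P_j'$ with $j\ne i$;
\item at most $\sum_{j\in I_2}t_j$ colors on the $X_j$ with $j\in I_2$.
\end{itemize}
Using $p_j=q_j+t_j$ and $p=\sum_j p_j$, the total is at most $2k-t'-|S\cap S_i|-s_1-q_i$. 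This is the analogue, at stage $c_2$, of the bound recorded for $c_3$ in Lemma~\ref{lem:nguyen-reduction-package}. So the \emph{free palette} $A_i$ has size
\[
 f:=|A_i|\ge |C|-2k+t'+s_1+|S\cap S_i|+q_i .
\]
Here $A_i$ consists of the colors not blocked by $c_2(S^2\setminus S_i)$, where the reusable colors on $P_i'$ are counted as free.

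\emph{Case $i\in I_1$: merging the pieces.} The second step uses the partition $X_i=P_{i1}\cup\cdots\cup P_{i,t_i+1}$ into stable pieces, each with $w(P_{ij})<k$. Each piece forbids fewer than $k$ colors in total, so it has an available color in $A_i$ as soon as $f\ge k$. With $t_i+1$ pieces and only $t_i$ colors allowed, it suffices to merge two pieces, or more generally to group the pieces into $t_i$ classes, so that each class has a common free color. Distinct classes may then receive distinct colors, after a Hall-type check with the slack available. If no merge is possible, then every pair of pieces satisfies $w(P_{ij})+w(P_{ij'})\ge f$, since the union of their forbidden sets must cover $A_i$. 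Summing with the lightest piece fixed gives
\[
 w(X_i)\ge w_{\min}+t_i(f-w_{\min}),
\]
and, since $w_{\min}<k$, this yields a lower bound on $x_i=w(X_i)-t_ik$ that is linear in $t_i$.

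\emph{Main obstacle.} I expect the last step to be the hard part: the naive pairwise merging loses about $k$ per part relative to the target $t_i(|C|-3k+\cdots)$. To recover this loss, I would first exploit that the reused colors of $P_i'$ do not count against the budget. I would also choose the grouping more cleverly, assigning the $t_i$ new colors greedily to the pieces in order of decreasing weight, so that each piece only needs to avoid its own forbidden set together with the previously chosen colors. The resulting counting should give exactly $x_i\ge t_i(f-k)$, which is the claimed inequality.
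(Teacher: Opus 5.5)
The paper gives no proof of this claim (it is imported from Nguyen), so the question is whether your argument stands on its own. Most of the setup does. The case $i\in I_0$ is trivial. Arguing from the termination of the greedy procedure for $i\in I_1$ is the right mechanism. Your bound $f=|A_i|\ge |C|-2k+t'+s_1+|S\cap S_i|+q_i$ is the $c_2$-analogue of the paper's bound on $|c_3(S^3\setminus S_i)|$.

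There is, however, a genuine gap at the only step with real content. Since $x_i=w(X_i)-t_ik$, the claim is exactly $w(X_i)\ge t_i f$. You may assume $f\ge k$, since otherwise the right-hand side is negative. Summing the pairwise inequalities $w(P_{ij})+w(P_{ij'})\ge f$ around the lightest piece gives only $x_i\ge t_i(f-k)-(t_i-1)w_{\min}$. This falls short whenever $t_i\ge 2$ and $w_{\min}>0$. The weights alone cannot give more: taking all $w(P_{ij})=f/2$ satisfies every pairwise inequality, yet the total is $(t_i+1)f/2<t_if$. Your proposed repair (assigning colors greedily in order of decreasing weight, which ``should give exactly'' the bound) is not an argument. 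Incidentally, no Hall-type check or distinctness is needed. All pieces lie in the stable set $S_i$, so only the number of distinct colors used on $X_i$ matters.

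The missing idea is to use the forbidden sets themselves, not just their sizes. Put $\Phi_j:=\bigcup_{v\in P_{ij}}F(v)$ and $B_j:=A_i\setminus\Phi_j$. Each $B_j$ is nonempty because $|\Phi_j|\le w(P_{ij})\le k-1<f$. Your no-merge condition then says precisely that the $B_j$ are pairwise disjoint. Hence $\sum_j|\Phi_j\cap A_i|=(t_i+1)f-\sum_j|B_j|\ge t_if$, so $w(X_i)\ge\sum_j|\Phi_j|\ge t_if$, which is the claim.

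Equivalently, you can argue without the pieces at all. Suppose some $\gamma\in A_i$ lay in fewer than $t_i$ of the lists $F(v)$, $v\in X_i$. Then color all other vertices of $X_i$ with $\gamma$, and each exceptional vertex with its own color from $A_i\setminus F(v)$, which is nonempty since $|F(v)|\le k-1<f$. This uses at most $t_i$ colors in total and contradicts termination. So every color of $A_i$ is forbidden at least $t_i$ times, and $w(X_i)\ge t_i|A_i|$. Because this coloring uses at most $t_i$ colors altogether, it respects the budget whichever way ``additional colors'' is read.
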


\begin{lemma}[Nguyen's Claim~4.10, restated]
\label{lem:nguyen-claim410}
In the notation of Lemma~\ref{lem:nguyen-reduction-package}, assume furthermore that $\chi(H)\le|C|-2k+2$.  Then there exists $J\subseteq I$ with $I_1\subseteq J$, $|J|\le|C|-2k+1$, and $|L_i|\ge|I|$ for every
$i\in I\setminus J$.
\end{lemma}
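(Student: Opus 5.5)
The statement is Nguyen's Claim~4.10. I will reprove it from the data in Lemma~\ref{lem:nguyen-reduction-package} and Lemma~\ref{lem:nguyen-claim48}. The plan has two steps: bound $|I|$ from above, then exhibit a small exceptional set $J$ outside of which the residual lists are long.

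For $i\in I$, the list bound from Lemma~\ref{lem:nguyen-reduction-package} together with $w(Y_i)=y_i$ gives
\[
|L_i|\ \ge\ |C|-\bigl(2k-t'-|S\cap S_i|-p_i\bigr)-y_i .
\]
Next I bound $|I|$. Since $I\subseteq[\chi]$ and $\chi\le\chi(H)\le |C|-2k+2$, we get $|I|\le |C|-2k+2$. Hence it suffices to find the indices with
\[
y_i\ \le\ t'+|S\cap S_i|+p_i-2 ,
\]
since for these indices $|L_i|\ge |C|-2k+2\ge |I|$.

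The set $J$ will be $I_1$ together with the indices $i\in I_0$ whose $y_i$ is too large; I call the latter $J_0$. For $i\in I_0$ we have $t_i=0$, so $p_i=q_i$ and $y_i=x_i$. The failure condition for $i\in I_0$ is therefore $x_i\ge t'+|S\cap S_i|+q_i-1$, and in particular $x_i\ge t'-1$. To bound $|J|$ I would use the global inequality $\sum_i x_i<kt'$, where every $x_i\ge 0$. Since each bad index contributes at least $t'-1$, this gives roughly $|J_0|(t'-1)<kt'$, so $|J_0|$ is at most about $k$ (with care needed when $t'$ is small).

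For $I_1$, Lemma~\ref{lem:nguyen-claim48} gives $x_i\ge t_i(|C|-3k+t'+s_1+\cdots)\ge t_i(t'+s_1)$ when $|C|\ge 3k$. Summing over $i\in I_1$ yields $s_1(t'+s_1)\le \sum_{i\in I_1}x_i<kt'$, which controls $s_1\ge |I_1|$. Combining the two estimates, $|J|\le |I_1|+|J_0|$ should fit within $|C|-2k+1$, because $|C|-2k+1\ge k+1$.

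The main obstacle is this counting. The contributions of $J_0$ and $I_1$ to $\sum x_i<kt'$ have to be shared, and the case $t'\le 1$ (where the bound $x_i\ge t'-1$ is vacuous) must be handled separately. When $t'\le 1$, I would instead use $t'\ge 1$, which follows from $\sum x_i<kt'$ and $x_i\ge 0$. In that case the requirement becomes $y_i\le |S\cap S_i|+p_i-1$, and I would argue that $y_i<k$ together with $|C|-2k+1\ge k+1$ still leaves room. If the direct count is too weak, I would instead take $J$ to be $I_1$ together with the $|C|-2k+1-|I_1|$ indices of largest $x_i$. For every remaining index, averaging gives $x_i< kt'/(|C|-2k+1)\le t'$, and I would then verify the list bound from $x_i\le t'-1$ together with the $|S\cap S_i|+p_i$ slack.
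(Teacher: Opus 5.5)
The paper does not prove this statement; it imports it from Nguyen. Your argument therefore has to stand on its own, and its counting step does not.

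\textbf{The gap.} Your sufficient condition $y_i\le t'+|S\cap S_i|+p_i-2$ is correct. But once you weaken the failure condition to $x_i\ge t'-1$, you cannot reach $|J|\le |C|-2k+1$. The bound $|J_0|(t'-1)<kt'$ gives only $|J_0|\le 2k-1$ when $t'=2$, and nothing when $t'=1$. Sharing the budget with $I_1$ does not help, since $I_1=\varnothing$ is allowed.

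Concretely, take $|C|=3k$, $\chi=k+2$, $t'=2$, $I=I_0=[\chi]$ and $x_i=1$ for all $i$. This is consistent with everything you retain: $\sum_i x_i=k+2<kt'$ for $k\ge 3$. Yet every index meets your weakened failure condition, so your estimates permit $J=I$ with $|J|=k+2>|C|-2k+1$.

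The fallback fails for the same reason. Choosing $J$ by largest $x_i$ does force $x_i\le t'-1$ off $J$. But that yields only $|L_i|\ge |C|-2k+1+|S\cap S_i|+p_i$. This is one short of $|I|$ when $|I|=|C|-2k+2$ and the omitted index has $|S\cap S_i|+p_i=0$. Ranking by $x_i$ gives no control over that; in the tied example above the omitted index is arbitrary.

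Likewise, for $t'=1$ your criterion rejects every index with $|S\cap S_i|+p_i=0$, whatever $y_i$ is. So ``$y_i<k$ leaves room'' is not an argument.

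\textbf{What is missing.} Two observations are needed.

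First, the statement is vacuous unless $|I|$ is as large as possible: if $|I|\le |C|-2k+1$, take $J:=I$. Here $|I|=\chi-|I_2|$ and $\chi=\chi(H)\le |C|-2k+2$; the partition must be an optimal coloring, and your inequality $\chi\le\chi(H)$ is backwards in general. So the only real case is $|I|=\chi=|C|-2k+2$ with $I_2=\varnothing$. There you need just \emph{one} index $i_0\in I_0$ with $y_{i_0}\le t'+|S\cap S_{i_0}|+p_{i_0}-2$, after which $J:=I\setminus\{i_0\}$ works.

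Second, to find $i_0$, sum the terms you discarded instead of dropping them. Suppose no such $i_0$ exists. Then $x_i=y_i\ge t'-1+|S\cap S_i|+p_i$ for all $i\in I_0$. Lemma~\ref{lem:nguyen-claim48} gives the same bound for $i\in I_1$, because $t_i\ge 1$, $s_1\ge t_i$ and $|C|\ge 3k-1$. Now sum over $I=[\chi]$ and use $\sum_i(|S\cap S_i|+p_i)=|S|+p=2k-t'$. This gives $\sum_i x_i\ge \chi(t'-1)+2k-t'=kt'+(k-1)+(\chi-k-1)(t'-1)\ge kt'$, since $t'\ge 1$ and $\chi=|C|-2k+2\ge k+1$. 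That contradicts $\sum_i x_i<kt'$.

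The identity $\sum_i(|S\cap S_i|+p_i)=2k-t'$ is exactly what your per-index estimates throw away.
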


\begin{definition}[Hall-feasible family]
Let $\calQ$ be a family of stable pieces, each equipped with an available-color list $L(Q)$.  We say that $\calQ$ is \emph{Hall-feasible} if for every subfamily $\calA\subseteq\calQ$,
\[
 \left|\bigcup_{Q\in\calA}L(Q)\right|\ge |\calA|.
\]
A family that is not Hall-feasible will be called \emph{Hall-bad}.
\end{definition}

\subsection*{Standing assumptions for the Hall analysis}

Throughout the rest of this section, we work under the following standing assumptions.  Let \(H\) be a minimal \(C\)-inextensible graph with a good template \(T=(S,c,F)\), and assume
\[
        |C|\ge 3k-1.
\]
We are in the contradiction regime of Nguyen's chromatic lemma, namely
\[
        \chi(H)\le |C|-2k+2.
\]
Write
\[
        |C|=3k-1+d
\]
with \(d\ge 0\).  We fix a partition
\[
        V(H)=S_1\cup\cdots\cup S_\chi
\]
into stable sets and apply Lemma~\ref{lem:nguyen-reduction-package}.  Thus all objects
\[
p_i,p,t,t',q_i,t_i,X_i,x_i,I_0,I_1,I_2,I,Y_i,L_i,s_1
\]
are those produced by that reduction package.  In particular,
\[
        0\le x_i<k,\qquad \sum_i x_i<kt',
\]
and Nguyen's Claims~4.8 and~4.10 are available in this setting.

\noindent We begin by excluding large Hall obstructions.  In this range, \ref{lem:nguyen-claim410} already forces one residual class to have a list large enough to contradict Hall-badness immediately.

\begin{theorem}[No large Hall obstruction]
\label{thm:no-large-hall-obstruction}
Assume $|C|=3k-1+d$ with $d\ge 0$.  There is no Hall-bad family $\calA\subseteq I$ with $|\calA|\ge k+d+1$.
\end{theorem}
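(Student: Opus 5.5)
The plan is to apply Lemma~\ref{lem:nguyen-claim410} directly and use a pigeonhole count against the exceptional set $J$. Throughout, I identify a subfamily $\calA\subseteq I$ with the family of residual pieces $\{Y_i : i\in\calA\}$, carrying the lists $L_i$. Hall-badness of $\calA$ then means
\[
\Bigl|\bigcup_{i\in\calA}L_i\Bigr|<|\calA|.
\]

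First I invoke Lemma~\ref{lem:nguyen-claim410}, whose hypothesis $\chi(H)\le|C|-2k+2$ is one of the standing assumptions. It gives a set $J\subseteq I$ with
\[
|J|\le |C|-2k+1=(3k-1+d)-2k+1=k+d,
\]
such that $|L_i|\ge|I|$ for every $i\in I\setminus J$.

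Next, suppose $\calA\subseteq I$ is Hall-bad with $|\calA|\ge k+d+1$. Since $|\calA|>k+d\ge|J|$, the family $\calA$ cannot be contained in $J$. Choose an index $i_0\in\calA\setminus J$. Then
\[
\Bigl|\bigcup_{i\in\calA}L_i\Bigr|\ \ge\ |L_{i_0}|\ \ge\ |I|\ \ge\ |\calA|.
\]
This contradicts Hall-badness, which proves the theorem.

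I do not expect any real obstacle here. The only points to check are that the standing assumptions supply the hypotheses of Claim~4.10, and that the size bound $|J|\le|C|-2k+1$ becomes exactly $k+d$ under the parametrization $|C|=3k-1+d$. That identity is what makes the threshold $k+d+1$ in the statement the right one.
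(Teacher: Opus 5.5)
Your proposal is correct and follows the paper's proof essentially line for line. Lemma~\ref{lem:nguyen-claim410} gives $|J|\le |C|-2k+1=k+d$, so any $\calA$ with $|\calA|\ge k+d+1$ contains an index $i\notin J$. Then $\bigl|\bigcup_{h\in\calA}L_h\bigr|\ge|L_i|\ge|I|\ge|\calA|$, which contradicts Hall-badness.
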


\begin{proof}
We use Lemma~\ref{lem:nguyen-claim410}, which gives a set $J\subseteq I$ with $I_1\subseteq J$, $|J|\le|C|-2k+1$, and $|L_i|\ge|I|$ for every $i\in I\setminus J$.  Since $|C|=3k-1+d$, we have
\[
 |C|-2k+1=k+d,
\]
and therefore $|J|\le k+d$.

\medskip\noindent Suppose, toward a contradiction, that $\calA\subseteq I$ is Hall-bad with $|\calA|\ge k+d+1$.  Since $|J|\le k+d$, the family $\calA$ is not contained in $J$, so there exists
\[
 i\in\calA\setminus J.
\]
For this index $i$, Lemma~\ref{lem:nguyen-claim410} gives $|L_i|\ge|I|$. Because $\calA\subseteq I$, we also have $|\calA|\le|I|$.  Therefore
\[
 \left|\bigcup_{h\in\calA}L_h\right|\ge|L_i|\ge|I|\ge|\calA|,
\]
contradicting the assumption that $\calA$ is Hall-bad.
\end{proof}

The remaining difficulty lies in the middle range, where a Hall-bad family is too small for the large-obstruction argument to apply but may still contain indices from $I_1$.  The following numerical lemma is the main new technical ingredient that controls this regime.

\begin{lemma}[numerical lemma]
\label{lem:parametric-I1-numerical}
Let $k,d,r,t'$ be integers with $d\ge 0$, $3\le r\le k+d$, and $1\le t'\le r-d-2$.  Set $A:=k+d-r+t'$ and $M:=r-d-1-t'$.  Let $b\ge 1$, $a+b=r$, and $1\le t_j\le M$ for $1\le j\le b$.  Put $T:=\sum_{j=1}^b t_j$. Then
\[
 aA+\sum_{j=1}^b\max\bigl\{t_j(t'+T+d-1),\,A+t_j(t_j-M)\bigr\}\ge kt'.
\]
\end{lemma}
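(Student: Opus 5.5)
The plan is to bound each maximum from below by one of its two arguments and then reduce to an explicit quadratic inequality in $T$. Two identities drive everything: $A+M=k-1$, and $t'+d-1-A=r-k-1$. Since $r\le k+d$, we have $A\ge t'\ge 1$. Since $t_j\ge 1$, we have $b\le T$, and therefore $aA=(r-b)A\ge (r-T)A$.

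First I use only the first argument of every maximum. This reduces the claim to
\[
 (r-b)A+T(t'+T+d-1)\ge kt'.
\]
A direct expansion gives $rA-kt'=(k-r)(r-t')+rd$. Replacing $b$ by $T$ and using the second identity, the left side minus $kt'$ is at least
\[
 \Phi(T):=(k-r)(r-t'-T)+rd+T(T-1).
\]
I then split according to the sign of $k-r$ and the size of $T$.

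\textbf{Case $r>k$.} Put $u=r-k$, so $1\le u\le d$. If $T\le r-t'$, then
\[
 \Phi(T)\ge -d(r-t'-T)+rd=d(t'+T)\ge 0 .
\]
If $T>r-t'$, every term of $\Phi$ is nonnegative. Either way the bound holds.

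\textbf{Case $r\le k$, $T\le r-t'$.} All terms of $\Phi$ are nonnegative, so the bound holds.

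\textbf{Case $r\le k$, $T\ge k-r+1$.} Here I regroup instead: $T(t'+T+d-1)-bA\ge T(T+r-k-1)\ge 0$, and $rA-kt'=(k-r)(r-t')+rd\ge 0$. The bound holds again.

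\textbf{Remaining window: $r\le k$ and $r-t'<T<k-r+1$.} This is where I expect the real difficulty, and where the second argument $A+t_j(t_j-M)$ must be used. I would not replace $b$ by $T$ here, since that loses too much. Instead I would split the indices $j$ into those with small $t_j$ and those with $t_j$ close to $M$.

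For $t_j$ close to $M$, the second argument is nearly $A$. So these pieces contribute about as much as the $a$ unconstrained pieces, and the count effectively behaves like $rA$. Since $r\le k$ and $t'<r$, that quantity already exceeds $kt'$ by $(k-r)(r-t')+rd$. For small $t_j$, I keep the first argument.

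Concretely, the first thing I would try is a threshold $\tau$ with $t_j\ge\tau$ assigned to the second argument. I would then minimize the resulting expression over the choice of the $t_j$ subject to $1\le t_j\le M$ and a fixed $T$. The penalty $t_j(M-t_j)$ in the second argument is concave in $t_j$, and the gain $t_j(t'+T+d-1)$ in the first is linear in $t_j$. So for fixed $T$ the worst case should be extremal: all $t_j$ equal to $1$ or to $M$, with at most one exceptional value.

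After that reduction, the check becomes a one- or two-variable quadratic inequality. It should use the constraint $T\le bM=b(r-d-1-t')$, which together with $T>r-t'$ forces $b\ge 2$. It should also use the hypothesis $t'\le r-d-2$, which guarantees $M\ge 1$. If this extremal reduction does not close the window cleanly, my fallback is to average the two arguments, using $\max\{\alpha,\beta\}\ge\lambda\alpha+(1-\lambda)\beta$ with a common $\lambda$ chosen in terms of $T$, and verify the resulting convex quadratic at the endpoints $T=r-t'$ and $T=k-r+1$ of the window.
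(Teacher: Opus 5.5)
The cases you actually prove are correct. The identities $A+M=k-1$ and $t'+d-1-A=r-k-1$ hold, $aA+T(t'+T+d-1)-kt'\ge\Phi(T)$ because $b\le T$ and $A\ge 0$, and $\Phi(T)\ge 0$ in each case you treat. But the lemma is not proved: the window $r\le k$, $r-t'<T\le k-r$ is only a plan, and it is not vacuous. Take $k=100$, $d=0$, $r=10$, $t'=2$ (so $A=92$, $M=7$), $b=9$, $a=1$ and all $t_j=1$. Then $T=9$ lies in the window and $aA+T(t'+T+d-1)-kt'=-18<0$, so the first arguments alone really do fail there. The heuristic behind your extremal reduction is also backwards. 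For fixed $T$, each summand $\max\{t_j(t'+T+d-1),\,A+t_j(t_j-M)\}$ is convex in $t_j$, being the maximum of a linear function and a convex quadratic. So minimizing with $b$ and $T$ fixed pushes the $t_j$ toward balanced values, not toward $\{1,M\}$.

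The missing step is short and needs no splitting of indices: in the window, take the second argument for every $j$. Using $a+b=r$, $r-t'=M+d+1$, and $t_j(t_j-M)\ge -t_j(M-1)$ for $t_j\ge 1$,
\[
 aA+\sum_{j=1}^b\bigl(A+t_j(t_j-M)\bigr)-kt'=(k-r)(r-t')+rd+\sum_{j=1}^b t_j(t_j-M)\ge (k-r)(M+d+1)+rd-T(M-1).
\]
Since $T\le k-r$ and $M\ge 1$, the right-hand side is at least $(k-r)(d+2)+rd\ge 0$. Your surplus observation about $rA-kt'$ was the right one: the total penalty $\sum_j t_j(M-t_j)\le T(M-1)$ is absorbed by $(k-r)(M+d+1)$ exactly when $T\le k-r$.

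With this line added you have a complete two-case proof. Use all first arguments when $T\ge k-r+1$, where $\Phi(T)\ge 0$ also covers $r>k$. Use all second arguments when $T\le k-r$. This differs from the paper's route. The paper never replaces $b$ by $T$: it writes the first-argument sum minus $kt'$ exactly as $n(M+d+1-b)+T(T+d-1)+t'(T+d-b)$ with $n=k+d-r$, and splits on $b$ versus $M+d+1$. When $b>M+d+1$, it uses the failure of the first bound to derive $n(M+d+1)>T(M-1)$, which then makes the second-argument sum positive. Your split on $T$ versus $k-r$ gives that key inequality, in the form $n(M+d+1)\ge T(M-1)$, directly from $T\le k-r\le n$, and so avoids the argument by contradiction.
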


\begin{proof}
Since $\sum_j\max\{U_j,V_j\}\ge\max\{\sum_j U_j,\sum_j V_j\}$, it suffices to show that at least one of
\[
 D_1:=aA+T(t'+T+d-1)-kt', \qquad
 D_2:=aA+\sum_{j=1}^b\bigl(A+t_j(t_j-M)\bigr)-kt'
\]
is nonnegative.  Set $n:=k+d-r\ge 0$, so $A=n+t'$ and $r=M+d+t'+1$. Using $a=r-b$,
\begin{align}
 D_1 &= n(M+d+1-b)+T(T+d-1)+t'(T+d-b), \tag{1}\label{eq:D1-parametric}\\
 D_2 &= n(M+d+1)+dt'+\sum_{j=1}^b t_j(t_j-M). \tag{2}\label{eq:D2-parametric}
\end{align}
\medskip\noindent For the Case $b\le M+d+1$: The first term of \eqref{eq:D1-parametric} is nonnegative.  Since $T\ge b$, we have $T+d-b\ge d\ge 0$, and $T(T+d-1)\ge 0$ since $T\ge 1$.  Hence $D_1\ge 0$.

\medskip\noindent For the Case $b>M+d+1$:  Assume $D_1<0$.  Since $T\ge b$ and $t'\ge 1$, the term $t'(T+d-b)\ge 0$ may be discarded from \eqref{eq:D1-parametric}, giving $n(b-M-d-1)>T(T+d-1)$.  Set $B:=b-M-d-1>0$, so $n>T(T+d-1)/B$ and hence
\[
 n(M+d+1)>\frac{T(T+d-1)(M+d+1)}{B}.
\]
We claim $(T+d-1)(M+d+1)\ge(M-1)B$.  Indeed, $T\ge b=M+d+1+B$ gives $T+d-1\ge M+2d+B$, so
\[
 (T+d-1)(M+d+1)\ge(M+2d+B)(M+d+1)=(M+2d)(M+d+1)+B(M+d+1)\ge B(M-1),
\]
where the last step uses $(M+2d)(M+d+1)\ge 0$ and $(M+d+1)-(M-1)=d+2\ge 0$.  Therefore
\begin{equation}
 n(M+d+1)>T(M-1). \tag{3}\label{eq:n-bound-parametric}
\end{equation}
Since $t_j\ge 1$, we have $t_j(t_j-M)=t_j^2-Mt_j\ge t_j-Mt_j=-t_j(M-1)$, and summing over $j$ gives $\sum_j t_j(t_j-M)\ge-T(M-1)$. Substituting \eqref{eq:n-bound-parametric} into \eqref{eq:D2-parametric},
\[
 D_2>T(M-1)+dt'-T(M-1)=dt'\ge 0.
\]
Thus one of $D_1,D_2$ is nonnegative in every case.
\end{proof}

We now use the numerical lemma to rule out middle-range Hall obstructions that meet $I_1$.  This is the heart of the parametric strengthening, since it is precisely here that Nguyen's original proof required additional slack.

\begin{theorem}[Exclusion of $I_1$-indices]
\label{thm:parametric-I1-exclusion}
Assume $|C|=3k-1+d$ with $d\ge 0$.  Let $\calA\subseteq I$ be a Hall-bad family with $3\le|\calA|\le k+d$.  Then $\calA\cap I_1=\varnothing$.
\end{theorem}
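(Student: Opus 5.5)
The plan is to argue by contradiction: assume $\calA$ is Hall-bad with $r:=|\calA|$, $3\le r\le k+d$, and $\calA\cap I_1\neq\varnothing$. Split $\calA$ into $\calA_0:=\calA\cap I_0$ with $a:=|\calA_0|$ and $\calA_1:=\calA\cap I_1$ with $b:=|\calA_1|\ge 1$, so $a+b=r$. Hall-badness gives $\bigl|\bigcup_{i\in\calA}L_i\bigr|\le r-1$, and in particular $|L_i|\le r-1$ for every $i\in\calA$. The goal is to convert this into a lower bound on $x_i$ for each $i\in\calA$, sum these bounds, and contradict $\sum_i x_i<kt'$ by means of Lemma~\ref{lem:parametric-I1-numerical}.

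\textbf{Lower bound on $y_i$.} From the definition of $L_i$ and the bound $|c_3(S^3\setminus S_i)|\le 2k-t'-|S\cap S_i|-p_i$ from Lemma~\ref{lem:nguyen-reduction-package}, we get
\[
 |L_i|\ge |C|-(2k-t'-|S\cap S_i|-p_i)-y_i\ge k-1+d+t'+p_i-y_i .
\]
Combined with $|L_i|\le r-1$, this yields $y_i\ge k+d-r+t'+p_i=A+p_i$.
\begin{itemize}
\item For $i\in\calA_0$ we have $x_i=y_i\ge A$, since $t_i=0$ and $p_i\ge 0$.
\item For $i\in\calA_1$, the piece $Y_i$ is a minimum-weight part among $t_i+1$ parts of $X_i$. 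Hence $w(X_i)\ge(t_i+1)y_i$, and so $x_i=w(X_i)-t_ik\ge y_i+t_i(y_i-k)$. Substituting $y_i\ge A+t_i$ (using $p_i\ge t_i$) should produce the second branch $A+t_i(t_i-M)$ after simplification, with $M=r-d-1-t'$. I expect some bookkeeping here, possibly using $y_i<k$ or the exact form of $A$ to get $t_i(y_i-k)\ge t_i(t_i-M)$.
\item For the first branch, apply Lemma~\ref{lem:nguyen-claim48}: $x_i\ge t_i(|C|-3k+t'+s_1+\dots)\ge t_i(d-1+t'+s_1)$, and $s_1\ge T:=\sum_{j\in\calA_1}t_j$.
\end{itemize}
So each $i\in\calA_1$ satisfies $x_i\ge\max\{t_i(t'+T+d-1),\,A+t_i(t_i-M)\}$.

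\textbf{Checking the numerical hypotheses.} Still to verify are $1\le t'$, $t'\le r-d-2$, and $1\le t_i\le M$.
\begin{itemize}
\item $t_i\ge 1$ holds because $i\notin I_0$.
\item $t'\ge 1$ follows from $0\le\sum x_i<kt'$.
\item $t_i\le M$ should follow from $x_i<k$ together with $y_i\ge A+t_i$ and $y_i<k$: the inequality $A+t_i<k$, i.e.\ $t_i<r-d-t'$, gives $t_i\le M$.
\item $M\ge 1$ then gives $t'\le r-d-2$.
\end{itemize}
This step, making the hypotheses of the numerical lemma fall out exactly (including $r\ge3$), is where I expect the main friction. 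I would check first whether $y_i<k$ alone suffices, and otherwise use Claim~4.8 with $x_i<k$.

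\textbf{Conclusion.} Summing over $\calA$ and invoking Lemma~\ref{lem:parametric-I1-numerical} gives $\sum_{i\in\calA}x_i\ge kt'$. Since all $x_i\ge0$, this yields $\sum_i x_i\ge kt'$, contradicting Lemma~\ref{lem:nguyen-reduction-package}(iv). Hence $\calA\cap I_1=\varnothing$.
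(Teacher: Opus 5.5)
Your proposal is correct and follows the paper's proof essentially step for step. It uses the same bound $y_i\ge A+p_i$ from Hall-badness and Nguyen's color count, the same two lower bounds on $x_i$ for $i\in\calA\cap I_1$ (the minimum-weight-part bound and Claim~4.8 with $s_1\ge T$), the same derivation of $t_i\le M$ and $t'\le r-d-2$ from $y_i\le k-1$ and $t_i\ge 1$, and the same appeal to Lemma~\ref{lem:parametric-I1-numerical}.

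The bookkeeping you were unsure about is immediate. Substituting $y_i\ge A+t_i$ into both terms of $(t_i+1)y_i-t_ik$ gives exactly $A+t_i(t_i-M)$, so $y_i<k$ is not needed at that step. Note also that the intermediate inequality you float, $t_i(y_i-k)\ge t_i(t_i-M)$, is off by $t_i$. It is really $t_i(y_i-k)\ge t_i(t_i-M-1)$, and the missing $t_i$ is supplied by the first term $y_i\ge A+t_i$.
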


\begin{proof}
Set $r:=|\calA|$.  Since $\calA$ is Hall-bad, $|\bigcup_{i\in\calA}L_i|<r$, so in particular $|L_i|\le r-1$ for every $i\in\calA$.  Recalling that
\[
 L_i=C\setminus\!\left(c_3(S^3\setminus S_i)\cup\bigcup_{v\in Y_i}F(v)\right),
\]
Nguyen's bound $|c_3(S^3\setminus S_i)|\le 2k-t'-|S\cap S_i|-p_i$ together with $|C|=3k-1+d$ gives
\[
 |L_i|\ge(3k-1+d)-(2k-t'-|S\cap S_i|-p_i)-y_i
        =k-1+d+t'+|S\cap S_i|+p_i-y_i.
\]
Combining with $|L_i|\le r-1$ yields
\begin{equation}
\label{eq:param-yi-lower}
 y_i\ge k+d-r+t'+|S\cap S_i|+p_i \qquad(i\in\calA).
\end{equation}

\bigskip\noindent Suppose for contradiction that $\calA\cap I_1\neq\varnothing$.  Write $B:=\calA\cap I_1$ and $A_0:=\calA\cap I_0$, with $b:=|B|\ge 1$ and $a:=|A_0|$, so $a+b=r$.

\medskip\noindent\textit{Lower bounds on $x_i$:}
For $i\in A_0$: since $t_i=0$ we have $X_i=Y_i$ and $x_i=y_i$, so \eqref{eq:param-yi-lower} (dropping the nonnegative terms $|S\cap S_i|$ and $p_i$) gives
\begin{equation}
\label{eq:param-I0-lower}
 x_i\ge k+d-r+t' =: A.
\end{equation}
For $i\in B$: since $Y_i$ is a minimum-weight piece among the $t_i+1$ parts of $X_i$, we have $(t_i+1)y_i\le w(X_i)=x_i+t_ik$, hence $x_i\ge(t_i+1)y_i-t_ik$.  Substituting \eqref{eq:param-yi-lower} and using $p_i=q_i+t_i\ge t_i$ and $|S\cap S_i|\ge 0$, we obtain
\begin{equation}
\label{eq:param-I1-first-lower}
 x_i\ge k+d-r+t'+t_i(t'-r+d+t_i+1).
\end{equation}
Setting $T:=\sum_{i\in B}t_i$ and applying Lemma~\ref{lem:nguyen-claim48} with $|C|=3k-1+d$ and $T\le s_1$,
\begin{equation}
\label{eq:param-I1-second-lower}
 x_i\ge t_i(t'+T+d-1) \qquad(i\in B).
\end{equation}

\medskip\noindent\textit{Range of $t':$}
Since $0\le\sum_{i\in I}x_i<kt'$, we have
\begin{equation}
\label{eq:param-tprime-positive}
 t'\ge 1.
\end{equation}
For $i\in B$, the lightness condition $y_i<k$ gives $y_i\le k-1$. Combining with \eqref{eq:param-yi-lower} and $p_i=q_i+t_i\ge 1$ (since $i\in I_1$) yields $r-1\ge d+t'+|S\cap S_i|+p_i\ge d+t'+1$, so
\begin{equation}
\label{eq:param-tprime-range}
 t'\le r-d-2.
\end{equation}
If $r\le d+2$ then \eqref{eq:param-tprime-range} contradicts \eqref{eq:param-tprime-positive}, so $r\ge d+3$.  Moreover $t_i\le p_i\le r-d-1-t'$.

\medskip\noindent
Set $A:=k+d-r+t'$ and $M:=r-d-1-t'$.  Then \eqref{eq:param-I0-lower} reads $x_i\ge A$ for $i\in A_0$, and since $t'-r+d+t_i+1=t_i-M$, \eqref{eq:param-I1-first-lower} reads $x_i\ge A+t_i(t_i-M)$ for $i\in B$. The constraints \eqref{eq:param-tprime-positive} and \eqref{eq:param-tprime-range} verify the hypothesis $1\le t'\le r-d-2$ of Lemma~\ref{lem:parametric-I1-numerical}.  Combining the lower bounds above with \eqref{eq:param-I1-second-lower} and applying that lemma gives $\sum_{i\in\calA}x_i\ge kt'$.  But $\sum_{i\in I}x_i<kt'$ and all $x_i\ge 0$, so $\sum_{i\in\calA}x_i\le\sum_{i\in I}x_i<kt'$, a contradiction.  Hence $\calA\cap I_1=\varnothing$.
\end{proof}

Once the $I_1$-indices have been excluded, the remaining middle-range obstruction is an $I_0$-only family.  In that case the argument becomes simpler, because the residual weight parameter agrees with the surplus parameter.

\begin{theorem}[No middle-size $I_0$-only Hall obstruction]
\label{thm:no-middle-I0-parametric}
Assume $|C|=3k-1+d$ with $d\ge 0$.  There is no Hall-bad family $\calA\subseteq I_0$ with $3\le|\calA|\le k+d$.
\end{theorem}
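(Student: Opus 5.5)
The plan is to rerun the $y_i$-lower-bound computation from the proof of Theorem~\ref{thm:parametric-I1-exclusion}, now in the simpler setting where every index lies in $I_0$. For such indices $x_i=y_i$, so no numerical lemma should be needed: summing the bounds directly should contradict $\sum_i x_i<kt'$.

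First I will set up the per-index bounds. Suppose for contradiction that $\calA\subseteq I_0$ is Hall-bad with $r:=|\calA|$ and $3\le r\le k+d$. Hall-badness gives $|L_i|\le r-1$ for every $i\in\calA$. Nguyen's bound $|c_3(S^3\setminus S_i)|\le 2k-t'-|S\cap S_i|-p_i$ from Lemma~\ref{lem:nguyen-reduction-package}, together with $|C|=3k-1+d$, then yields exactly \eqref{eq:param-yi-lower}:
\[
 y_i\ge k+d-r+t'+|S\cap S_i|+p_i\ge A:=k+d-r+t' \qquad (i\in\calA).
\]
Since $t_i=0$ on $I_0$, we have $x_i=y_i$, so $x_i\ge A$. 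As before, $0\le\sum_{i\in I}x_i<kt'$ forces $t'\ge1$.

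Second, I will bound $t'$ from above using lightness. From $y_i\le k-1$ and the first display, $k+d-r+t'\le k-1$, that is,
\[
 t'\le r-d-1 .
\]
I will write $u:=r-t'$, so $u\ge d+1$.

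Third, I will sum the bounds over $\calA$ and compare with $kt'$. The sum satisfies $\sum_{i\in\calA}x_i\ge rA$, and
\[
 rA-kt'=r(k+d-r+t')-kt'=k(r-t')+r(d+t'-r)=u(k-r)+rd .
\]
I will split into two cases. If $r\le k$, both terms are nonnegative. If $k<r\le k+d$, then $k-r\ge -d$, so
\[
 u(k-r)+rd\ge -ud+rd=d(r-u)=dt'\ge0 .
\]
In either case $\sum_{i\in\calA}x_i\ge kt'$. Since all $x_i\ge0$, this gives $\sum_{i\in I}x_i\ge kt'$, contradicting Lemma~\ref{lem:nguyen-reduction-package}(iv).

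The only delicate point is the case $r>k$, where the coefficient $k-r$ is negative. It is handled by combining the upper bound $r\le k+d$ with the identity $r-u=t'$. The lightness-derived bound $t'\le r-d-1$ is what guarantees $u\ge d+1$, which keeps the $r\le k$ case clean. The hypothesis $r\ge3$ is not actually used; it only matches the range left open by Theorem~\ref{thm:no-large-hall-obstruction}.
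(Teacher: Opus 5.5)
Your proof is correct and follows essentially the same route as the paper: you derive $x_i=y_i\ge k+d-r+t'$ from Hall-badness, sum over $\calA$, and verify $r(k+d-r+t')\ge kt'$ by splitting on $r$ versus $k$. The only difference is bookkeeping. You rewrite the difference as $u(k-r)+rd$ with $u=r-t'$ and use the lightness bound (only in the weak form $u\ge 0$) in both cases, whereas the paper treats $r\ge k$ as immediate and invokes $t'\le r-d-1$ only when $r<k$, obtaining $k(d+1)-r>0$.
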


\begin{proof}
Suppose such $\calA$ exists and set $r:=|\calA|$.  Since $\sum x_i<kt'$ and all $x_i\ge 0$, we have $t'\ge 1$.  Arguing as in
Theorem~\ref{thm:parametric-I1-exclusion}, Hall-badness and $t_i=0$ for $i\in I_0$ give $x_i=y_i\ge k+d-r+t'$ for all $i\in\calA$, hence
\[
 \sum_{i\in\calA}x_i\ge r(k+d-r+t').
\]
It remains to show $r(k+d-r+t')\ge kt'$, equivalently $r(k+d-r)+(r-k)t'\ge 0$.  If $r\ge k$ this is immediate since $r\le k+d$. If $r<k$, then $y_i\le k-1$ and \eqref{eq:param-yi-lower} force $t'\le r-d-1$; if $r\le d+1$ this contradicts $t'\ge 1$, so $r\ge d+2$ and
\[
 r(k+d-r)-(k-r)t'\ge r(k+d-r)-(k-r)(r-d-1)=k(d+1)-r>0.
\]
Hence $\sum_{i\in\calA}x_i\ge kt'$, contradicting $\sum_{i\in I}x_i<kt'$.
\end{proof}

With the residual Hall theorem in hand, Nguyen's chromatic lemma can be completed at the exact threshold.  The only change from Nguyen's proof is that the final slack-based argument is replaced by Hall's theorem on the residual family.

\begin{theorem}[Residual Hall feasibility above the exact threshold]
\label{thm:residual-hall-feasible-parametric}
Assume $|C|=3k-1+d$ with $d\ge 1$.  Then the residual family $(Y_i)_{i\in I}$ is Hall-feasible. 
That is, for every $\calA\subseteq I$,
$$\left|\bigcup_{i\in \calA}L_i\right| \geq |\calA|.$$
\end{theorem}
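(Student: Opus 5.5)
The plan is to split according to the size $r:=|\calA|$ of a putative Hall-bad family $\calA\subseteq I$ and to show that every range of $r$ is already excluded by one of the preceding results, except the small range $r\le 2$, which I will handle directly with a pointwise list bound. The empty family satisfies Hall's condition trivially. For $r\ge k+d+1$, Theorem~\ref{thm:no-large-hall-obstruction} rules out Hall-badness outright.

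For $3\le r\le k+d$, I first apply Theorem~\ref{thm:parametric-I1-exclusion}. It shows that a Hall-bad $\calA$ in this range must satisfy $\calA\cap I_1=\varnothing$. Since $I=I_0\cup I_1$, this forces $\calA\subseteq I_0$. Theorem~\ref{thm:no-middle-I0-parametric} then forbids exactly such families. So no Hall-bad family of size between $3$ and $k+d$ exists.

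The remaining case is $r\in\{1,2\}$, and this is where the hypothesis $d\ge1$ is needed; it is the only genuinely new step. I will show that every residual list satisfies $|L_i|\ge 2$. Then for $|\calA|\le 2$ we get $\bigl|\bigcup_{i\in\calA}L_i\bigr|\ge|L_i|\ge 2\ge|\calA|$. To obtain the list bound, I use the estimate from the proof of Theorem~\ref{thm:parametric-I1-exclusion}. It combines Nguyen's bound $|c_3(S^3\setminus S_i)|\le 2k-t'-|S\cap S_i|-p_i$ from Lemma~\ref{lem:nguyen-reduction-package} with $|C|=3k-1+d$, giving
\[
|L_i|\ge k-1+d+t'+|S\cap S_i|+p_i-y_i .
\]
Since each residual piece is light, $y_i\le k-1$. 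Dropping the nonnegative terms $|S\cap S_i|$ and $p_i$ then yields $|L_i|\ge d+t'$.

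Next I need $t'\ge1$. This follows, as in the earlier theorems, from $0\le\sum_i x_i<kt'$: the sum is nonnegative, so $kt'>0$. Hence $|L_i|\ge d+1\ge 2$ because $d\ge1$. This settles the small range.

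Since all ranges of $r$ are now covered, Hall's condition holds for every $\calA\subseteq I$. I expect no real obstacle here; the only point to check carefully is that $t'\ge1$ holds unconditionally. It does whenever $I\neq\varnothing$. If $I=\varnothing$, the statement is vacuous, since the only subfamily is empty.
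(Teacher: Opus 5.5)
Your proposal is correct and follows the paper's proof: the same case split on $r=|\calA|$, with $r\ge k+d+1$ handled by Theorem~\ref{thm:no-large-hall-obstruction}, $3\le r\le k+d$ by Theorem~\ref{thm:parametric-I1-exclusion} followed by Theorem~\ref{thm:no-middle-I0-parametric}, and $r\le 2$ by the pointwise bound $|L_i|\ge d+t'\ge 2$ obtained from Nguyen's estimate, lightness $y_i\le k-1$, and $t'\ge 1$. Your caveat about $I=\varnothing$ is not needed, because $0\le\sum_{i=1}^{\chi}x_i<kt'$ already gives $t'\ge 1$ unconditionally.
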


\begin{proof}
Suppose $\calA\subseteq I$ is Hall-bad and set $r:=|\calA|$.  Since $\sum x_i<kt'$ and all $x_i\ge 0$, we have $t'\ge 1$.  Nguyen's bound gives $|L_i|\ge d+t'+|S\cap S_i|+p_i\ge d+t'\ge 2$ for all $i\in I$, ruling out $r=1$.  For $r=2$, Hall-badness forces $|L_i\cup L_j|=1$, contradicting $|L_i|\ge 2$.  For $3\le r\le k+d$, Theorem~\ref{thm:parametric-I1-exclusion} gives $\calA\subseteq I_0$, contradicting Theorem~\ref{thm:no-middle-I0-parametric}.  For $r\ge k+d+1$,
Theorem~\ref{thm:no-large-hall-obstruction} gives a contradiction.  Hence no Hall-bad family exists.
\end{proof}

The final step is now formal.  We combine the exact chromatic lemma for minimal $C$-inextensible graphs with Nguyen's connectivity theorem to obtain the desired bound for $g(k,m)$.

\begin{theorem}[Exact version of Nguyen's chromatic lemma]
\label{thm:exact-nguyen-chromatic}
Let $H$ be a minimal $C$-inextensible graph with a good template $T=(S,c,F)$. If $|C|\ge 3k$, then $\chi(H)\ge |C|-2k+3$.
\end{theorem}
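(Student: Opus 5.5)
We argue by contradiction. Suppose $|C|\ge 3k$ and $\chi(H)\le |C|-2k+2$. Write $|C|=3k-1+d$; then $|C|\ge 3k$ gives $d\ge 1$. Fix a partition of $V(H)$ into $\chi=\chi(H)$ stable sets $S_1,\dots,S_\chi$ and apply Lemma~\ref{lem:nguyen-reduction-package}. This produces the coloring $c_3$ of $S^3$ and the residual stable pieces $Y_i$ ($i\in I$) with lists $L_i$. By construction, $c_3$ is a proper coloring of $H[S^3]$ that agrees with $c$ on $S$ and respects all forbidden sets. We are then exactly under the standing assumptions of Section~4, with $d\ge 1$.

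Next we invoke Theorem~\ref{thm:residual-hall-feasible-parametric}. It says that $(Y_i)_{i\in I}$ is Hall-feasible, so $|\bigcup_{i\in\calA}L_i|\ge|\calA|$ for every $\calA\subseteq I$. By Hall's theorem we can choose distinct colors $\gamma_i\in L_i$ for $i\in I$. We then color every vertex of $Y_i$ with $\gamma_i$; this is allowed because $Y_i\subseteq S_i$ is stable. Call the resulting coloring of all of $V(H)$ by $\varphi$.

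It remains to check that $\varphi$ is a proper coloring respecting $T$, which contradicts $C$-inextensibility.
\begin{enumerate}[label=(\alph*)]
\item Since $\gamma_i\notin\bigcup_{v\in Y_i}F(v)$, each forbidden set is avoided.
\item $S$ and the vertices of $S^3$ retain their $c_3$-colors, so $\varphi|_S=c$ and $\varphi$ is proper on $H[S^3]$.
\item An edge from $Y_i$ to a vertex $u\in S^3$ has $u\notin S_i$, because $S_i$ is stable. Hence $c_3(u)\in c_3(S^3\setminus S_i)$, and this set was removed from $L_i$.
\item An edge between $Y_i$ and $Y_j$ with $i\ne j$ is properly colored because $\gamma_i\neq\gamma_j$.
\end{enumerate}
So $\varphi$ respects $T$, a contradiction; hence $\chi(H)\ge|C|-2k+3$.

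The main obstacle is not in this assembly but in confirming that the objects of Lemma~\ref{lem:nguyen-reduction-package} really exist under the hypothesis $\chi(H)\le|C|-2k+2$. In particular, Nguyen's Claim~4.10 (Lemma~\ref{lem:nguyen-claim410}) must apply at $|C|\ge 3k$ rather than at his larger palette. The point to check is that his argument up to the construction of $c_3$ and the bound on $|c_3(S^3\setminus S_i)|$ uses only $|C|\ge 3k$ and the cost bound. The extra $\frac{1}{16}k$ slack should enter only in his final greedy step, which we have replaced by the Hall argument above.
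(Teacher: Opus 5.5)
Your proposal is correct and is essentially the paper's proof: you assume $\chi(H)\le|C|-2k+2$ with $|C|=3k-1+d$ and $d\ge 1$, run the reduction package, apply Theorem~\ref{thm:residual-hall-feasible-parametric} and Hall's theorem to get distinct $\gamma_i\in L_i$, and verify properness and respect of $T$ exactly as in your items (a)--(d). The paper shares the caveat in your last paragraph rather than resolving it, since it imports Lemmas~\ref{lem:nguyen-reduction-package} and~\ref{lem:nguyen-claim410} under its standing assumption $|C|\ge 3k-1$ without re-verifying them, so relative to the paper your argument has no additional gap.
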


\begin{proof}
\noindent Suppose not, and let $\chi:=\chi(H)\le|C|-2k+2$.  Write $|C|=3k-1+d$ with $d\ge 1$.  Choose a partition $V(H)=S_1\cup\cdots\cup S_\chi$ of $V(H)$ into stable sets, set $P_i:=S_i\setminus S$, and define
\[
 p_i=\left\lfloor\frac{w(P_i)}{k}\right\rfloor,
 \qquad
 p=\sum_i p_i,
 \qquad
 t=2k-|S|,
 \qquad
 t'=t-p.
\]
\medskip\noindent Apply Lemma~\ref{lem:nguyen-reduction-package}.  This produces the sets $X_i$, the parameters $q_i,t_i,x_i$, the index sets $I_0$, $I_1$, $I_2$, $I=I_0\cup I_1$, the intermediate colorings $c_1$, $c_2$, $c_3$ on $S^1\subseteq S^2\subseteq S^3$, and the residual pieces $(Y_i)_{i\in I}$ with residual lists $(L_i)_{i\in I}$.  By construction, the only uncolored vertices after $c_3$ are the stable sets $Y_i$ for $i\in I$.

\medskip\noindent At the final residual-coloring step, instead of Nguyen's final numerical argument, we apply Theorem~\ref{thm:residual-hall-feasible-parametric}, which gives that the family $(Y_i)_{i\in I}$ is Hall-feasible:
\[
 \left|\bigcup_{i\in\calA}L_i\right|\ge|\calA| \qquad (\calA\subseteq I).
\]
\medskip\noindent By Hall's theorem, there exist pairwise distinct colors $\gamma_i\in L_i$ for each $i\in I$.  Color every vertex of $Y_i$ by $\gamma_i$.  This extends $c_3$ to a proper $C$-coloring of $H$: each $Y_i\subseteq S_i$ is stable and the colors $\gamma_i$ are distinct for distinct $i$, so there is no conflict among the residual pieces themselves; since $\gamma_i\notin c_3(S^3\setminus S_i)$, no vertex of $Y_i$ conflicts with any already-colored vertex outside $S_i$; there are no edges from $Y_i$ to already-colored vertices inside $S_i$ because $S_i$ is stable; and $\gamma_i\notin F(v)$ for $v\in Y_i$ because $\gamma_i\in L_i$ by definition.

\medskip\noindent Therefore the coloring extends to a proper $C$-coloring of all of $H$ respecting the template $T=(S,c,F)$, contradicting $C$-inextensibility of $H$. Hence $\chi(H)\ge|C|-2k+3$.
\end{proof}

\section{Proof of Theorem \ref{thm:main}}
\noindent The remaining step is now formal: we combine the exact chromatic lemma for minimal $C$-inextensible graphs with Nguyen's connectivity theorem to obtain the desired bound for $g(k,m)$.

\begin{proof}[Proof of Theorem \ref{thm:main}]
Let $G$ satisfy
\[
 \chi(G)\ge \max(m+2k-2,3k+1).
\]
Set
\[
 |C|=\chi(G)-1.
\]
Then $|C|\ge 3k$, and by Lemma~\ref{lem:good-template} together with the empty template lemma, every induced subgraph minimal with respect to being $C$-inextensible admits a good inextensibility witness.  Let $H\subseteq G$ be such a minimal induced $C$-inextensible subgraph.  By Theorem~\ref{thm:exact-nguyen-chromatic},
\[
 \chi(H)\ge |C|-2k+3=\chi(G)-2k+2.
\]
Since $\chi(G)\ge m+2k-2$, we obtain
\[
 \chi(H)\ge m.
\]
By Theorem~\ref{thm:nguyen-connectivity}, the graph $H$ is $(k+1)$-connected.  Thus $H$ is a $(k+1)$-connected subgraph of chromatic number at least $m$, as required.
\end{proof}

\begin{remark}
The bound $|C|\ge 3k$ in Theorem~\ref{thm:exact-nguyen-chromatic} is sharp for the present method.  The Hall-feasibility argument breaks down at $|C|=3k-1$, i.e.\ $d=0$, already at the smallest possible obstruction size: when $d=0$ the list size lower bound gives only $|L_i|\ge t'\ge 1$, and the inequality $|L_i|\ge d+t'\ge 2$ used to rule out Hall-bad pairs in the proof of Theorem~\ref{thm:residual-hall-feasible-parametric} requires $d\ge 1$ strictly.  Thus any improvement of the upper bound on $g(k,k+1)$ below $3k+1$ would require either a fundamentally different approach to the residual coloring step, or a refinement of the reduction package that produces residual lists with additional structure not captured by the present framework.  We also note that the best known lower bound $g(k,k+1)\ge 2k$, due to~\cite{AKSST}, leaves open the question of whether $g(k,k+1)=3k+1$ exactly; closing this gap would require a new extremal construction.
\end{remark}

\bibliography{bibliography}
\bibliographystyle{alpha}
\end{document}